\documentclass[12pt,reqno]{amsart}
\usepackage[utf8x]{inputenc}
\usepackage[icelandic, english]{babel}
\usepackage{t1enc}
\usepackage{graphicx}
\usepackage{amsmath,amssymb,amsthm} 
\usepackage{latexsym}
\usepackage{ucs}
\usepackage[intoc]{nomencl}
\usepackage{enumerate,color}
\usepackage{url}
\usepackage[pdfborder={0 0 0}]{hyperref}
\usepackage{appendix}
\usepackage{eso-pic}
\usepackage[sf,normalsize]{subfigure}
\usepackage[format=plain,labelformat=simple,labelsep=colon]{caption}
\usepackage{placeins}
\usepackage{tabularx}

% Page dimensions
\textheight=23cm 
\textwidth=13.5cm
\voffset=-1cm 
\hoffset=-1cm 
\parindent=16pt

\setlength{\parskip}{\baselineskip}
\raggedbottom

%Nýir hlutir

\newcommand{\C}{{\mathbb  C}}
\newcommand{\N}{{\mathbb  N}}

\newcommand{\R}{{\mathbb  R}}

\newcommand{\Z}{{\mathbb  Z}}
\newcommand{\Pro}{{\mathbb  P}}

\newcommand{\Det}{{\operatorname{Det}}}
\usepackage{amsthm}
\usepackage{mathrsfs}

%Enumeration of formulas

\numberwithin{equation}{section}

\newtheorem{theorem}{Theorem}[section]
\newtheorem{corollary}[theorem]{Corollary}
\newtheorem{lemma}[theorem]{Lemma}
\newtheorem{proposition}[theorem]{Proposition}
\newtheorem{example}[theorem]{Example}

\newtheorem*{remarks*}{Remarks}
\newtheorem*{remark*}{Remark}

% Personal macros  Ragnar Sigurdsson.
%
% Special letters
%

%\newcommand{\C}{{\mathbb  C}}
%\newcommand{\Cn}{{\mathbb  C^n}}
%\newcommand{\CRn}{{{\mathbb  C\mathbb  R^n}}}
\newcommand{\D}{{\mathbb D}}

\renewcommand{\L}{{\mathcal L}}

%\newcommand{\N}{{\mathbb  N}}

%\newcommand{\Q}{{\mathbb  Q}}
%\newcommand{\R}{{\mathbb  R}}
%\newcommand{\Rn}{{\mathbb  R^n}}
%\newcommand{\Rnn}{{\mathbb  R^{n\times n}}}

%\newcommand{\T}{{\mathbb  T}}
%\newcommand{\Z}{{\mathbb  Z}}

% Bold vector symbols

% Binary operations

\newcommand{\scalar}[2]{{\langle#1,#2\rangle}}

% Operators

%\newcommand{\Cinf0}[1]{{C_0^{\infty}(#1)}}

\renewcommand{\Im}{{\operatorname{Im}}}

\renewcommand{\Re}{{\operatorname{Re}}}

\makeatletter
\newcommand{\subalign}[1]{%
	\vcenter{%
		\Let@ \restore@math@cr \default@tag
		\baselineskip\fontdimen10 \scriptfont\tw@
		\advance\baselineskip\fontdimen12 \scriptfont\tw@
		\lineskip\thr@@\fontdimen8 \scriptfont\thr@@
		\lineskiplimit\lineskip
		\ialign{\hfil$\m@th\scriptstyle##$&$\m@th\scriptstyle{}##$\crcr
			#1\crcr
		}%
	}
}
\makeatother

\hyphenation{pluri-sub-har-mon-ic}
\hyphenation{holo-morphic}
\title[Monge-Amp\`ere measures of plurisubharmonic exhaustions]
{Monge-Amp\`ere measures of plurisubharmonic exhaustions 
associated to the Lie norm of holomorphic maps}

\author{Ragnar Sigurdsson and Au\dh unn Sk\'uta Sn\ae bjarnarson}
\date{\today}

\dedicatory{{\small \it Dedicated to the memory of 
Professor J\'ozef Siciak}}

\begin{document}
\baselineskip=17pt	
\maketitle
\begin{abstract}
In this paper we derive formulas for the Monge-Amp\`ere measures 
of functions of the form $\log|\Phi|_c$, where $\Phi$ is a holomorphic
map on a complex manifold $X$ of dimension $n$ with values in 
$\C^{n+1}\setminus\{0\}$ and $|\cdot|_c$ is the Lie norm on $\C^{n+1}$.

\medskip\noindent
\subjclass{{\bf MSC (2010):} {Primary 32U05; Secondary 32Q28, 32U10,  32W20}}
\end{abstract}

\section{Introduction}
\label{sec:1}

\noindent
This paper grew out of a study of a generalization of the
Bernstein-Walsh-Siciak theorem to Stein manifolds by the second
author \cite{Sna:2018}, where the problem of constructing examples of
plurisubharmonic (psh) exhaustion functions which are  maximal 
outside a compact set appears naturally.  
The main result of the paper is the following.

\begin{theorem}\label{th:1.1}
Let $\Phi=(\Phi_0,\dots,\Phi_n)\colon X\to \C^{n+1}\setminus\{0\}$ 
be a holomorphic map on a complex manifold of dimension $n$. 
The function $\log|\Phi|_c$,
where $|\cdot|_c$ denotes the Lie norm  on  $\C^{n+1}$,
is psh on $X$ and a maximal $C^\infty$  function  on
$\Phi^{-1}(\C^{n+1}\setminus \C\R^{n+1})$, 
where $\C\R^{n+1}=\{\alpha a \,;\, \alpha\in \C, a \in \R^{n+1}\}$.
The Monge-Amp\`ere measure 
$\big(dd^c(\log|\Phi|_c)\big)^n$
has no mass on the set 
$$
{\mathcal A}_\Phi=\{z\in X\, ;\, \operatorname{rank} d\Phi(z)<n
\text{ or } \Phi(z) \in  \operatorname{range} d\Phi(z) \}.
$$
If $M= \Phi ^{-1}(\C\R^{n+1})\setminus {\mathcal A}_\Phi$
is non-empty, then it is an $n$-dimensional 
real analytic manifold and 
$\big(dd^c(\log|\Phi|_c)\big)^n$ is the current of integration 
along $M$ of the $n$-form
\begin{equation}
C_n \big(\Phi_0^2+\cdots+\Phi_n^2\big)^{-\frac{(n+1)}2}
\sum_{j=0}^{n}(-1)^{j+1} \Phi_j
d\Phi_0\wedge\cdots\wedge\widehat{d\Phi_j}\wedge\cdots\wedge
d\Phi_n
\label{eq:1.1}  
\end{equation}
where $C_n=(-1)^{\frac{n(n-1)}{2}} n!\Omega_n$,
$\Omega_n$ is 
 the volume of the
unit-ball in $\R^n$,
and the holomorphic  square root
of the function $\Phi_0^2+\cdots+\Phi_n^2$
is chosen with the same argument as the vector $\Phi$ in $\C\R^{n+1}$.
\end{theorem}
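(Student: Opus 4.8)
I would organise the proof around the two symmetries of the Lie norm — invariance under the real orthogonal group $O(n+1,\R)$ and under the scalings $z\mapsto\lambda z$, $\lambda\in\C^*$ — together with the identity $|z|_c^2=\|z\|^2+\sqrt{\|z\|^4-|z\cdot z|^2}$, in which $\|\cdot\|$ is the Euclidean norm and $z\cdot z=z_0^2+\cdots+z_n^2$. Being the logarithm of a norm, $\log|\cdot|_c$ is psh and continuous on $\C^{n+1}\setminus\{0\}$; hence $\log|\Phi|_c$ is psh and continuous on $X$, so its Monge--Amp\`ere operator is well defined and charges no pluripolar set. Since $\C\R^{n+1}=\{z:\|z\|^4-|z\cdot z|^2=0\}$ and the radicand is strictly positive off $\C\R^{n+1}$, the norm $|\cdot|_c$, and therefore $\log|\Phi|_c$, is real analytic on $\Phi^{-1}(\C^{n+1}\setminus\C\R^{n+1})$.

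\emph{Maximality off $\C\R^{n+1}$, and the set $\A_\Phi$.} For $z_0\notin\C\R^{n+1}$ the vectors $\Re z_0,\Im z_0$ span a real plane $P\subseteq\R^{n+1}$; its complexification $\Pi=P\oplus iP$ is a complex $2$-plane through $z_0$, and an orthonormal basis of $P$ identifies $\Pi$ isometrically with $\C^2$ in a way that also respects $z\cdot z$, so $|\cdot|_c|_\Pi$ is the Lie norm of $\C^2$, which in coordinates equals $\max(|\zeta_0+i\zeta_1|,|\zeta_0-i\zeta_1|)$. Its logarithm is pluriharmonic off $\C\R^2$ and $z_0\in\Pi\setminus\C\R^2$, so $\log|\cdot|_c$ is pluriharmonic near $z_0$ along $\Pi$; thus $\Pi$ lies in the kernel of the Levi form of $\log|\cdot|_c$ at $z_0$, which therefore has rank $\le n-1$, and $\big(dd^c\log|\cdot|_c\big)^n\equiv0$ on $\C^{n+1}\setminus\C\R^{n+1}$. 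Pulling back by $\Phi$ where $\log|\Phi|_c$ is smooth shows it is maximal on $\Phi^{-1}(\C^{n+1}\setminus\C\R^{n+1})$, so $\big(dd^c\log|\Phi|_c\big)^n$ is supported in $\Phi^{-1}(\C\R^{n+1})$. In local holomorphic coordinates $z\in\A_\Phi$ exactly when the $(n+1)\times(n+1)$ matrix with columns $\Phi,\partial_1\Phi,\dots,\partial_n\Phi$ is singular, so $\A_\Phi$ is the zero set of a holomorphic function; if proper it is pluripolar and so carries no Monge--Amp\`ere mass, whence the measure is supported on $M=\Phi^{-1}(\C\R^{n+1})\setminus\A_\Phi$ (the case $\A_\Phi=X$, where $M=\emptyset$, is disposed of separately, $\Phi$ then factoring through a cone of dimension $\le n$ and $\log|\Phi|_c$ being maximal everywhere).

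\emph{Local model along $M$ and the formula.} Fix $p_0\in M$ and write $\Phi(p_0)=e^{i\theta_0}a_0$, $a_0\in\R^{n+1}\setminus\{0\}$. Replacing $\Phi$ by $\lambda A\Phi$ with $A\in O(n+1,\R)$ and $\lambda\in\C^*$ suitable (which changes $\log|\Phi|_c$ only by a constant and leaves $\A_\Phi$ unchanged) we may assume $\Phi(p_0)=e_0=(1,0,\dots,0)$. Then $\Phi_0\ne0$ near $p_0$, the map $G=\Phi_0^{-1}(\Phi_1,\dots,\Phi_n)$ is holomorphic with $G(p_0)=0$, and since $p_0\notin\A_\Phi$ the image of $d\Phi_{p_0}$ is an $n$-plane not containing $e_0$, so projection onto the last $n$ coordinates is injective there and $G$ is a local biholomorphism. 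In the coordinate $w=G$ one has $\log|\Phi|_c=\log|\Phi_0|+v(w)$ with $v(w)=\log|(1,w_1,\dots,w_n)|_c$, hence $\big(dd^c\log|\Phi|_c\big)^n=(dd^c v)^n$ near $p_0$; and $\{z_0=1\}\cap\C\R^{n+1}=\{(1,w):w\in\R^n\}$, so $w=G$ carries $M$ onto $\R^n\subseteq\C^n$ near $p_0$, which shows $M$ is a real analytic $n$-dimensional manifold. A Taylor expansion gives $v(w)=|\Im w|+\tfrac12\Re(w\cdot w)+O(|w|^3)$ near $0$, the quadratic term being pluriharmonic; rescaling $w\mapsto rw$ and using the Bedford--Taylor continuity of the Monge--Amp\`ere operator reduces the density of $(dd^c v)^n$ at $0$ to that of $\big(dd^c|\Im w|\big)^n$, which one computes by regularising $|\Im w|$ by $(|\Im w|^2+\varepsilon^2)^{1/2}$, whose Hessian determinant in $\Im w$ is $\varepsilon^2(|\Im w|^2+\varepsilon^2)^{-(n+2)/2}$, and letting $\varepsilon\to0$ with the beta integral $\int_{\R^n}(1+|u|^2)^{-(n+2)/2}\,du=\Omega_n$. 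Transporting the resulting value back through $G$, $A$, $\lambda$ — under which the right-hand side of \eqref{eq:1.1} is invariant, being built $O(n+1,\R)$-equivariantly and $\C^*$-covariantly from $\Phi$ and $d\Phi$, with $(z\cdot z)^{-(n+1)/2}$ absorbing the scaling weight and the prescribed branch of the square root making the resulting current real with sign $(-1)^{n(n-1)/2}$ — yields \eqref{eq:1.1} at $p_0$, hence on all of $M$.

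\emph{Main obstacle.} The technical heart is this last step: extracting the precise Lebesgue density of the Bedford--Taylor Monge--Amp\`ere measure of the continuous psh model $v$, which is smooth and maximal off $\R^n$ but only Lipschitz, of $|\Im w|$-type, along it, and then carrying the answer back to $\Phi$ while controlling all normalisation and orientation constants so that the covariant form \eqref{eq:1.1} emerges with exactly $C_n=(-1)^{n(n-1)/2}n!\,\Omega_n$.
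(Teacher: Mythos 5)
Your argument is essentially correct in outline but follows a genuinely different route from the paper. The paper never localizes to a model: it regularizes the Lie norm globally by $v_\varepsilon(\zeta)=(|a|^2+\varepsilon|b|^2)^{1/2}+(|b|^2+\varepsilon|a|^2)^{1/2}$, computes all eigenvalues of the Levi matrix of $\log v_\varepsilon$ in closed form (Theorem \ref{th:6a.1}), obtains the exact formula $(dd^c(\log v_\varepsilon\circ\Phi))^n=c_{n,\varepsilon}\,|\Phi|^2|\Det[J_\Phi|\Phi]|^2\varphi_\varepsilon(\Phi)^{-(n+2)}dV$ via a linear-algebra lemma, and then evaluates the weak limit as $\varepsilon\to 0$ by pulling back through the auxiliary submersion $\tilde\Phi(z,z_{n+1})=(1+z_{n+1})\Phi(z)$ and integrating explicitly in the $(\theta,a,\beta)$ coordinates of Proposition \ref{prop:3.2}. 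You instead exploit the $O(n+1,\R)\times\C^*$ symmetry to reduce to the affine model $v(w)=\log|(1,w)|_c$ near a point of $M$, identify the tangent behaviour $v(w)=|\Im w|+\tfrac12\Re(w\cdot w)+O(|w|^3)$, and extract the density by a blow-up plus the one-dimensional-in-spirit computation $(dd^c|\Im w|)^n=n!\,\Omega_n\,\lambda_{\R^n}$. Your slicing proof of maximality (restricting the Lie norm to the complexification of ${\rm span}_\R\{\Re\zeta,\Im\zeta\}$, where it becomes $\max(|\zeta_0+i\zeta_1|,|\zeta_0-i\zeta_1|)$) is also different from, and arguably more transparent than, the paper's appeal to the eigenvalue computation. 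What the paper's route buys is that every limit is computed against explicit smooth approximants, so no geometric-measure-theoretic input is needed, and Theorem \ref{th:6a.1} is of independent interest; what your route buys is economy -- a single model computation plus equivariance -- at the cost of the differentiation argument you rightly identify as the technical heart.

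Two places where your sketch is thin enough to deserve explicit attention. First, the blow-up step: locally uniform convergence $r^{-1}v(rw)\to|\Im w|+{\rm pluriharmonic}$ gives, by Bedford--Taylor continuity, only that $\lim_{r\to0}r^{-n}(dd^cv)^n(rB)=n!\,\Omega_n\,\lambda_{\R^n}(B)$ at the chosen point; to conclude that $(dd^cv)^n$ \emph{equals} the stated density times $\mathcal H^n|_{\R^n}$ you must run this at every point of $M$ and invoke the differentiation theorem for Radon measures (finite pointwise density everywhere kills the singular part). This is standard but must be said, since the measure is a priori only known to be supported on $M$. Second, the degenerate case ${\mathcal A}_\Phi=X$ is dispatched with ``$\Phi$ factors through a cone of dimension $\le n$'': this needs an actual argument (e.g.\ when ${\rm rank}\,d\Phi=n$ the image is locally a cone $\C^*\cdot S$ with $\dim S=n-1$, on which $\log|\cdot|_c$ splits as a pluriharmonic function plus a function of $n-1$ variables, so the $n$-fold product vanishes; the lower-rank case factors through fewer variables). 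The paper gets this case for free from its explicit formula, since $\Det[J_\Phi|\Phi]\equiv0$ forces the regularized Monge--Amp\`ere measures to vanish identically. Finally, as in the paper's own examples, the identification of the positive measure with the current of integration of the form \eqref{eq:1.1} involves an orientation of $M$ and a choice of branch; your ``$O(n+1,\R)$-equivariance'' holds only up to this sign, so the bookkeeping should be restricted to $SO(n+1,\R)$ or tracked explicitly.
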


The Monge-Amp\`ere measure of a $C^2$ psh function $v$ on 
an open subset of $\C^n$ is defined as
\begin{align*}
(dd^c v)^n=\underbrace{(dd^c v)\wedge\cdots\wedge (dd^c v)}_{n \text{
  times}}
=4^nn!\Det\big(\tfrac{\partial^2 v}{\partial z_j\partial \bar{z}_k}\big)_{j,k}dV,
\end{align*}
where $d^c=i(\bar{\partial}-\partial)$ and
$dV=\bigwedge_{j=1}^{n}\tfrac i2  dz_j\wedge d\bar{z}_j$ is the
standard Euclidean volume form on $\C^{n}$. By the fundamental work of
Bedford and Taylor \cite{Bed:1976,Bed:1982} one can define the
Monge-Amp\`ere measure of a  $L^\infty_{\text{loc}}$ psh function $v$
as 
\begin{align*}
(dd^c v)^n:=\lim_{k\to\infty}(dd^c v_k)^n
\end{align*}
where the limit is taken in the weak$^*$ topology of measures and $(v_k)$ is any sequence of smooth psh functions decreasing to $v$.

The Lie norm on $\C^N$  is the largest norm on $\C^N$ 
which extends the Euclidean norm $|\cdot|$ on $\R^N$ 
to a {\it complex} norm on $\C^N$. It thus dominates the 
standard Hermitian norm on $\C^N$.  It is given by an explicit
formula in Proposition \ref{prop:2.1},  which shows that it is 
a $C^\infty$ function on $\C^N\setminus \C\R^N$. 
By Bos, Levenberg, Ma'u, and Piazzon
\cite{Bos:2017}  the $0$ eigenspace of the Levi matrix
of $\log|\cdot|_c$ on $\C^N$ at every $\zeta\neq 0$ is spanned by 
$\zeta$ and $\bar \zeta$. This is the case $\varepsilon=0$ in
Theorem~6.1 below.
These vectors are linearly independent if and only if $\zeta$ is 
in $\C^N\setminus \C\R^N$, so the kernel of the Levi matrix of 
the function $\log|\cdot|_c$  
is two dimensional at every $\zeta\in \C^N\setminus\C\R^N$.
Hence  $\log|\Phi|_c$ is maximal on $\Phi^{-1}(\C^{n+1}\setminus \C\R^{n+1})$ 
for every holomorphic map $\Phi\colon X\to \C^{n+1}$ on a
manifold of dimension $n$. This is  the advantage of the Lie norm 
rather than, say, the usual Euclidean
norm for constructing maximal psh functions.
For a general $\Phi\colon X\to \C^{n+1}$ the function 
$\log|\Phi|$ is not maximal on any open subset of $X$.

Recall that a Stein manifold $X$
is said to be {\it parabolic} if every bounded psh function on 
$X$ is constant, it is said to be $S$-{\it parabolic} if there 
exists a psh exhaustion on $X$ which is maximal outside a compact
subset of $X$.  Such an exhaustion is called a {\it parabolic
  potential} or a {\it special psh exhaustion function}.  
The Stein manifold  is said to be $S^*$-{\it parabolic}
if there exists a continuous parabolic potential on $X$.   
See Aytuna and Sadullaev \cite{Ayt:2011,Ayt:2014,Ayt:2016}
for a study of these classes of manifolds.

In order to understand when we can expect to find a 
parabolic potential of the form $\log|\Phi|_c$, let us
look at a slightly more general problem.  Let 
$\Phi\colon X\to Y$ be a holomorphic map on a manifold 
$X$ of dimension $n$ into a manifold $Y$ of
dimension $N$ and let $h$ be a psh function on $Y$.  If $h$ is an exhaustion
and $\Phi$ is a proper map, then $h\circ \Phi$ is
an exhaustion on $X$.  If $h$ is a $C^2$ function 
on an open subset $V$ of $Y$
then $h$ is maximal on $V$ if and only if the
Levi matrix of $h$ has eigenvalue $0$ at every 
point $\zeta\in V$. Recall that 
the {\it Levi form} of $h$ is the 
Hermitian form 
$w\mapsto \L_h(\zeta,w)=\sum_{j,k=1}^N h_{j\bar k}(\zeta)w_j\bar w_k$,
where the Levi matrix $(h_{j\bar k}(\zeta))_{j,k=1}^N$, is given in local coordinates 
$\zeta=(\zeta_1,\dots,\zeta_N)$ as 
$h_{j\bar k}=\partial^2h/\partial\zeta_j\partial \bar \zeta_k$
and  $(w_1,\dots,w_N)$ are the coordinates of the tangent vector 
$w$  at $\zeta$ with respect to the
basis $(\partial/\partial \zeta_1,\dots,\partial/\partial \zeta_N)$.
We have
\begin{equation*}
  \label{eq:1.3}
  \L_{h\circ\Phi}(z;w)=\L_h(\Phi(z),d\Phi(z)(w)), \qquad 
z\in \Phi^{-1}(V), \  w\in T_zX. 
\end{equation*}
From this equation
we see that if for every $z\in \Phi^{-1}(V)$ there exists a tangent vector 
$w\neq 0$ of $X$ at $z$ which is mapped by $d\Phi(z)$ to the kernel of
the Levi matrix of $h$ at $\Phi(z)$, then $h\circ\Phi$ is maximal on 
$\Phi^{-1}(V)$.
If at every $\zeta\in V$ the kernel of the Levi matrix of $h$ at $\zeta$ 
has  dimension at least $k\geq 1$ and $N=n+k-1$, then it follows
by the rank-nullity theorem from linear algebra that
$h\circ \Phi$ is maximal on $\Phi^{-1}(V)$.

Our observations can now be summarized as follows:

\begin{corollary}
If $X$ is a Stein manifold of dimension $n$,
and there  exists a proper holomorphic map  
$\Phi\colon X\to \C^{n+1}$,  
such that $\Phi^{-1}(\C\R^{n+1})$ is compact,
then $X$ is $S^*$-parabolic.   More generally, if 
there  exists a proper holomorphic map  $\Phi\colon X\to \C^N$, 
$N\geq n$,
such that  $\Phi^{-1}(\C\R^N)$ is compact and such
that  for every $z\in X$ there exists a tangent vector $w\neq 0$
at $z$ which  is mapped by the derivative 
$d\Phi(z)$ at $z$  into the span of 
$\Phi(z)$ and $\overline{\Phi(z)}$, then $X$ is
$S^*$-parabolic.   
\end{corollary}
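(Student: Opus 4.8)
The plan is to exhibit an explicit continuous parabolic potential on $X$, namely the truncation $\tau=\max\bigl(\log|\Phi|_c,-A\bigr)$ for an arbitrary constant $A$, where $|\cdot|_c$ is the Lie norm on $\C^N$ (with $N=n+1$ in the first assertion and $N\ge n$ in the second). First I would dispose of the three soft properties of $\tau$. Since $\log|\cdot|_c$ is psh on $\C^N$ --- including at $0$, where it equals $-\infty$ --- the composition $\log|\Phi|_c=\log|\cdot|_c\circ\Phi$ is psh on $X$, and the maximum of a psh function with a constant is psh. It is continuous and real valued, because $|\Phi|_c$ is a continuous $[0,\infty)$-valued function and truncating from below at $-A$ removes the only possible $-\infty$ values, which occur on $\Phi^{-1}(0)$. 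And it is an exhaustion, because $\{\tau\le c\}$ is empty for $c<-A$ and equals $\Phi^{-1}\bigl(\{\zeta:|\zeta|_c\le e^{c}\}\bigr)$ for $c\ge -A$, which is compact since $|\cdot|_c$ is a norm and $\Phi$ is proper.

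The substance is to show that $(dd^c\tau)^n$ has compact support. By locality of the Monge-Amp\`ere operator I would work over the two open sets $U_1=\{\log|\Phi|_c>-A\}$, on which $\tau=\log|\Phi|_c$, and $U_2=\{\log|\Phi|_c<-A\}$, on which $\tau\equiv-A$ and hence $(dd^c\tau)^n=0$. On $U_1$ the map $\Phi$ omits $0$, so on $U_1\cap\Phi^{-1}(\C^N\setminus\C\R^N)$, where $\log|\Phi|_c$ is $C^\infty$, I would use the hypothesis of the second assertion (or, in the first assertion, the rank-nullity count recalled above) to produce at each point $z$ a tangent vector $w\neq0$ with $d\Phi(z)(w)\in\Span\bigl(\Phi(z),\overline{\Phi(z)}\bigr)$, which by \cite{Bos:2017} is the kernel of the Levi matrix of $\log|\cdot|_c$ at $\Phi(z)$; the composition formula for Levi forms then forces the (positive semi-definite) Levi matrix of $\log|\Phi|_c$ at $z$ to have $0$ as an eigenvalue, so $\log|\Phi|_c$ is maximal there. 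Consequently $(dd^c\tau)^n$ restricted to $U_1$ is carried by $U_1\cap\Phi^{-1}(\C\R^N)$, it is zero on $U_2$, and the only set not covered by $U_1\cup U_2$, namely $\{\log|\Phi|_c=-A\}$, lies inside $\Phi^{-1}\bigl(\{\zeta:|\zeta|_c\le e^{-A}\}\bigr)$. Hence $(dd^c\tau)^n$ is supported in the compact set $\Phi^{-1}(\C\R^N)\cup\Phi^{-1}\bigl(\{\zeta:|\zeta|_c\le e^{-A}\}\bigr)$, so $X$ is $S^*$-parabolic. On $U_1$ one could equally invoke Theorem~\ref{th:1.1}, which already realises $\bigl(dd^c(\log|\Phi|_c)\bigr)^n$ as a current carried by $\Phi^{-1}(\C\R^N)$, but this is available only when $N=n+1$.

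Nothing above is deep; the one point to watch is the interaction between the truncation and the compactness hypothesis, namely that the three possible carriers of Monge-Amp\`ere mass --- $\Phi^{-1}(\C\R^N)$, the level set $\{\log|\Phi|_c=-A\}$, and $\Phi^{-1}(0)$ --- must be absorbed into a single compact set, which properness of $\Phi$ guarantees for every $A$. It remains to check the rank-nullity step used when $N=n+1$: at a point $z$ with $\Phi(z)\notin\C\R^{n+1}$ the image of $d\Phi(z)$ is a complex subspace of $\C^{n+1}$ of dimension at most $n$, while $\Span\bigl(\Phi(z),\overline{\Phi(z)}\bigr)$ has complex dimension $2$, so either $d\Phi(z)$ is not injective and one takes $w$ in its kernel, or the two subspaces meet in complex dimension at least $n+2-(n+1)=1$ and one takes $w$ mapping into that intersection; in either case a vector $w\neq0$ with the required property exists, so the first assertion reduces to the second.
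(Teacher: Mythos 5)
Your proposal is correct and follows essentially the same route as the paper: the parabolic potential is $\log|\Phi|_c$, its maximality off $\Phi^{-1}(\C\R^N)$ comes from the composition formula for Levi forms together with the fact that the kernel of the Levi matrix of $\log|\cdot|_c$ at $\zeta\notin\C\R^N$ is $\Span\{\zeta,\bar\zeta\}$, and the case $N=n+1$ reduces to the general case by the same rank--nullity count. Your only addition is the truncation $\max(\log|\Phi|_c,-A)$ to handle the $-\infty$ values on $\Phi^{-1}(0)$ and guarantee a genuinely continuous potential, a point the paper leaves implicit.
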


There is an extensive literature  
on the existence of proper holomorphic maps on Stein manifolds, see
e.g., Forstneri{\v c} \cite{For:2017}.

The plan of the paper is as follows. In Section \ref{sec:2}
we review the general theory of the cross norm 
$\|\cdot\|_c$ on 
the complexification $V_\C$ of a real normed 
space $(V,\|\cdot\|)$.
For an inner product space 
$(V,\scalar \cdot\cdot)$ with norm
$|\cdot|$ we  derive a 
simple formula for the cross norm, $|\zeta|_c=|a|+|b|$, 
where the vector $\zeta\in V_\C$ is represented as 
$\zeta=e^{i\theta}(a+ib)$,  with $\theta\in \R$, $a,b\in V$,
$\scalar ab=0$, and $|b|\leq |a|$.
From this formula we
give a new proof of the explicit formulas for $|\zeta|_c$ originally
proved by Dru\.zkowski in \cite{Dru:1974}.     
If $|\cdot|$ is the Euclidean norm on $\R^N$, then
$|\cdot|_c$ is called the Lie norm on $\C^N$.

In Section \ref{sec:3} we first take a look at
the set $\C\R^N$ from a few different viewpoints.  
Then we look at
the representation of $\zeta\in \C^N$ as
$\zeta=e^{i\theta}(a+ib)$ with $\theta\in \R$,
$a,b\in \R^N$, 
$\scalar ab=0$, and $|b|\leq |a|$, which
is very useful in our calculations.
We derive a formula for the pullback of the 
volume form $dV$ on $\C^N$ to the $2N$-dimensional 
real manifold 
$\{(\theta,a,b)\,;\, \scalar ab=0, |b|< |a|\}$.
This formula is applied in Section \ref{sec:7} for calculating 
weak limits of Monge-Amp\`ere measures.

In Sections \ref{sec:4}  and \ref{sec:5} we apply Theorem \ref{th:1.1} to calculate the Siciak-Zakharyuta extremal function and the corresponding equilibrium measure for a few compact subsets $K$ of $\R^n\subset \C^n$. The extremal function has been extensively studied
for several decades and, in particular, authors have shown interest in
the case when $K$ is a compact subset of $\R^N$
\cite{Bar:1992,Bed:1986,Blo:2009,Bur:2005,Bur:2010,Bur:2014,Lun:1985}. Among
our examples is a result that motivated this study \cite{Bos:2017}.

As a preparation for the proof of Theorem \ref{th:1.1}
we analyze in Section \ref{sec:6} the Levi form of 
the maximal function  $h_\varepsilon=\log v_\varepsilon$, where the 
family of functions, 
$v_\varepsilon(\zeta)=(|a|+\varepsilon|b|)^{\frac 12}
+(|b|+\varepsilon|a|)^{\frac 12}$, $\varepsilon\geq 0$,
regularizes 
the Lie norm $v(\zeta)=|\zeta|_c=|a|+|b|$.
Finally, in Section \ref{sec:7} we complete the proof of
Theorem \ref{th:1.1} by  calculating the Monge-Amp\`ere 
measure $\big(dd^c(\log|\Phi|_c)\big)^n$.
as the weak limit of $\big(dd^c(\log v_\varepsilon\circ\Phi)\big)^n$

\subsubsection*{\bf Acknowledgment} This work was partially 
supported by 
by The Icelandic Centre for Research (Rannís), grant no.~152572-052,
and a doctoral grant from The University of Iceland Research Fund.
The authors would like to thank the anonymous referee for comments and
suggestions that led to considerable improvements of the paper.

\section{Lie norm}
\label{sec:2}

\noindent
The Lie norm on $\C^N$ is a special case of an extension of a norm on
a real vector space $V$ to a complex norm on its complexification
$V_{\C}=\C\otimes_\R V$, i.e., the extension satisfies
$\|\alpha \zeta\|=|\alpha|\|\zeta\|$ for 
$\alpha\in \C$ and $\zeta\in V_\C$. There are a few different ways of introducing the complexification,
see Munoz, Sarantopoulos, and Tonge \cite{Mun:1991}. 
One is to define  $V_\C$ as  $V\times V$ with usual
addition and complex multiplication $(\alpha+i\beta)(\xi,\eta)=
(\alpha \xi-\beta \eta,\beta \xi+\alpha \eta)$
where $\alpha,\beta\in \R$ and $\xi,\eta\in V$.  
We have  $(0,\eta)=i(\eta,0)$, so if we identify
the real vector $\xi$ in $V$ with $(\xi,0)$ in $V_\C$, then
we can write  every vector $\zeta=(\xi,\eta)$ as $\zeta=\xi+i\eta$.  
Thus $V$ is a real subspace
of $V_\C$ and $V_\C=V \oplus iV$.  The vectors $\xi$ and $\eta$ are called 
the {\it real} and {\it imaginary} parts of $\zeta$, denoted
by $\Re \zeta$ and $\Im \zeta$.

In \cite{Mun:1991} it is shown that 
every norm $\|\cdot\|$ on $V$  can be extended to a complex
norm on $V_\C$.  
If we have such an extension and $\zeta=\sum_{j=1}^k\alpha_j
\xi_j$ is a vector in $V_\C$, where $\alpha_j\in \C$ and
$\xi_j\in V$,    
then the triangle inequality implies
$\|\zeta\|\leq \sum_{j=1}^k |\alpha_j|\|\xi_j\|$.  
For $\zeta\in V_\C$ we define
\begin{equation}
  \label{eq:2.1}
  \|\zeta\|_c=
\inf\Big\{\sum_{j=1}^k |\alpha_j|\|\xi_j\| \, ;\, 
\zeta= \sum_{j=1}^k \alpha_j\xi_j, \,  \alpha_j\in \C, \,
\xi_j\in V, k\in \N^*\Big\}.
\end{equation}
Since  we can extend every norm on $V$ to a complex
norm on $V_\C$,  it is easy to see 
that $\zeta\mapsto \|\zeta\|_c$ is a norm and moreover it is the largest 
complex norm on $V_\C$ which extends $\|\cdot\|$ from  $V$.
The norm $\|\cdot\|_c$ on $V_\C$ is called the crossnorm
of $\|\cdot\|$.  If  $|\cdot|$ denotes  the Euclidean norm on 
$\R^N$ then the cross norm $|\cdot|_c$ is called 
the Lie norm on $\C^N$.

The set 
$\C V=\{ \alpha \xi   \in V_\C \, ;\, \alpha\in \C, \xi\in V\}
=\{ e^{i\theta}  \xi   \in V_\C \, ;\, \theta \in \R, \xi\in V\}$
consisting  of all vectors in $V_\C$ with parallel  real and imaginary
parts is important in our calculations.  If 
$\|\cdot\|$ extends from $V$ to a complex norm on 
$V_\C$, also denoted by $\|\cdot\|$, 
then we denote the distance 
from $\zeta\in V_\C$ to $\C V$ by
$d(\zeta,\C V)=\inf\{\|\zeta-e^{i\theta}\xi\| \,;\, \theta\in \R,
\xi\in V\}$. For $\zeta=e^{i\theta}\xi\in
\C V$ we have $\|\zeta\|=\|\xi\|$.

Assume from now on that $V$ is an inner product space with norm 
$\xi\mapsto |\xi|=\scalar \xi\xi^{\frac 12}$, where 
$(x,\xi)\mapsto \scalar x\xi$ is an inner product on 
$V$.   The bilinear form on $V$ has a unique extension to a 
symmetric $\C$-bilinear form  on $V_\C$ by the formula
\begin{equation}
  \label{eq:2.2}
  \scalar{x+iy}{\xi+i\eta}=\scalar x\xi-\scalar y\eta
+i\scalar x\eta+i\scalar y\xi,
\end{equation}
and the Hermitian form $(z,\zeta)\mapsto \scalar z{\bar\zeta}$
generates an extension of the norm  by the formula
\begin{equation}
  \label{eq:2.3}
  |\zeta|=\scalar{\zeta}{\bar \zeta}^{\frac 12}
=\big(|\xi|^2+|\eta|^2\big)^{\frac 12}, \qquad \zeta=\xi+i\eta \in V_\C.
\end{equation}
Take $\zeta\in V_\C$ and  $\theta\in \R$ such that 
$e^{-2i\theta}\scalar \zeta\zeta$ is a positive real number.
If $a=\Re(e^{-i\theta}\zeta)$ and $b=\Im(e^{-i\theta}\zeta)$,
then  
$e^{-2i\theta}\scalar \zeta\zeta=|a|^2-|b|^2+2i\scalar ab \geq 0$,
so we conclude that $\scalar ab=0$ and $|b|\leq|a|$. 

\begin{proposition} \label{prop:2.1}
 If $\zeta=\xi+i\eta=e^{i\theta}(a+ib)$, 
$\theta\in \R$,  $\xi,\eta,a,b\in V$, 
$\scalar ab=0$, and $|b|\leq |a|$, then 
\begin{equation}
  \label{eq:2.4}
  |a|=\tfrac 1{\sqrt 2}\big(|\zeta|^2+|\scalar \zeta\zeta|\big)^{\frac 12}
\quad \text{ and } \quad
  |b|=\tfrac 1{\sqrt 2}\big(|\zeta|^2-|\scalar \zeta\zeta|\big)^{\frac 12}.
\end{equation}
The $|\cdot|$-distance from $\zeta$ to $\C V$ 
is $d(\zeta,\C V)=|b|$ and the cross norm is given by the formula
\begin{align}
  \label{eq:2.6}
|\zeta|_c&=|a|+|b|
= \big(|\zeta|^2-d(\zeta,\C V)^2\big)^{\frac 12}+d(\zeta,\C V)\\
&= \big(|\zeta|^2 +\big(
|\zeta|^4-|\scalar \zeta\zeta|^2\big)^{\frac 12}\big)^{\frac 12}
\nonumber
\\
&=\big(|\zeta|^2 +2\big(|\xi|^2|\eta|^2-\scalar \xi\eta^2\big)^{\frac
  12}\big)^{\frac 12}.
\nonumber
\end{align}
If $V$ is a Hilbert space and  
$\scalar \zeta\zeta\neq 0$, then $e^{i\theta}a$ is a unique 
point in $\C V$ with minimal distance to $\zeta$.
If $\scalar \zeta\zeta=0$, then 
$\{e^{i\theta}\Re(e^{-i\theta}\zeta) \,;\, \theta \in \R\}
=\{\tfrac 12 \big(\zeta +e^{2i\theta}\bar \zeta\big) \,;\, 
\theta \in \R\}$
is a circle
consisting of all points in $\C V$ with minimal distance to $\zeta$.
\end{proposition}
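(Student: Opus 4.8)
The plan is to dispatch the four parts of the proposition in order: the explicit values \eqref{eq:2.4} of $|a|$ and $|b|$, the distance formula, the cross-norm formula \eqref{eq:2.6}, and finally the uniqueness and circle statements. A representation $\zeta=e^{i\theta}(a+ib)$ with $\scalar ab=0$ and $|b|\le|a|$ is provided by the discussion preceding the proposition, and the first step will show it is essentially unique.

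For \eqref{eq:2.4} I would compute directly from \eqref{eq:2.2} and \eqref{eq:2.3}: one has $\scalar\zeta\zeta=e^{2i\theta}\big(|a|^2-|b|^2+2i\scalar ab\big)=e^{2i\theta}(|a|^2-|b|^2)$, hence $|\scalar\zeta\zeta|=|a|^2-|b|^2$, while $|\zeta|^2=|a|^2+|b|^2$. Solving this $2\times2$ system for $|a|^2$ and $|b|^2$ and taking square roots gives \eqref{eq:2.4}, and in particular shows $|a|,|b|$ are determined by $\zeta$. For the distance I would factor $e^{i\theta}$ out of $|\zeta-e^{i\phi}\xi|$ and write $e^{i\psi}\xi=(\cos\psi)\xi+i(\sin\psi)\xi$ with $\psi=\phi-\theta$ and $\xi\in V$, so that by \eqref{eq:2.3}
\[
|\zeta-e^{i\phi}\xi|^2=|a-(\cos\psi)\xi|^2+|b-(\sin\psi)\xi|^2.
\]
Minimising the right-hand side over $\psi$ and $\xi$ is the same as minimising $|a-\alpha e|^2+|b-\beta e|^2$ over unit vectors $e\in V$ and $\alpha,\beta\in\R$; optimising first in $\alpha,\beta$ (take $\alpha=\scalar ae$, $\beta=\scalar be$) reduces this to maximising $g(e)=\scalar ae^2+\scalar be^2$ over unit $e$. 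Decomposing $e$ along $\Span\{a,b\}$ and its orthogonal complement, and using $\scalar ab=0$ together with $|b|\le|a|$, one finds $\max g=|a|^2$, so $d(\zeta,\C V)^2=|\zeta|^2-|a|^2=|b|^2$.

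The cross-norm I would pin down by two inequalities. The bound $|\zeta|_c\le|a|+|b|$ is immediate from \eqref{eq:2.1} applied to the representation $\zeta=e^{i\theta}a+e^{i(\theta+\pi/2)}b$. For the reverse inequality --- the crux of the proof --- I would exhibit a $\C$-linear functional $\Lambda$ on $V_\C$ that dominates the norm on $V$ and is large on $\zeta$: assuming $a,b\neq0$, set $e_1=a/|a|$, $e_2=b/|b|$ (orthonormal since $\scalar ab=0$) and let $\Lambda(z)=\scalar z{e_1}-i\scalar z{e_2}$, using the symmetric $\C$-bilinear extension \eqref{eq:2.2}, so $\Lambda$ is $\C$-linear. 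For real $\xi\in V$, $|\Lambda(\xi)|^2=\scalar\xi{e_1}^2+\scalar\xi{e_2}^2\le|\xi|^2$ by Bessel's inequality, while a one-line computation gives $\Lambda(\zeta)=e^{i\theta}(|a|+|b|)$. Hence for any representation $\zeta=\sum_j\alpha_j\xi_j$,
\[
|a|+|b|=|\Lambda(\zeta)|\le\sum_j|\alpha_j|\,|\Lambda(\xi_j)|\le\sum_j|\alpha_j|\,|\xi_j|,
\]
and taking the infimum yields $|\zeta|_c\ge|a|+|b|$; the degenerate cases $b=0$ and $a=0$ are immediate. The remaining equalities in \eqref{eq:2.6} then follow by elementary algebra: expand $(|a|+|b|)^2=|\zeta|^2+2|a|\,|b|$, insert $|a|\,|b|=\tfrac12\big(|\zeta|^4-|\scalar\zeta\zeta|^2\big)^{1/2}$ from \eqref{eq:2.4}, and use $|\scalar\zeta\zeta|^2=|\zeta|^4-4\big(|\xi|^2|\eta|^2-\scalar\xi\eta^2\big)$, obtained by expanding \eqref{eq:2.2} for $\zeta=\xi+i\eta$.

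Finally, for the uniqueness and circle statements I would use that when $V$ is a Hilbert space the cone $\C V$ is closed in $V_\C$ (a limit $e^{i\theta_n}\xi_n\to\zeta$ forces $\xi_n\to e^{-i\theta}\zeta\in V$ along a subsequence), so the infimum defining the distance is attained, and by the reduction above the minimisers are exactly the points $e^{i\theta}\big(\scalar ae+i\scalar be\big)e$ with $e$ ranging over the unit vectors of $\Span\{a,b\}$ on which $g$ attains its maximum $|a|^2$. If $\scalar\zeta\zeta\neq0$ then $|a|>|b|$, the only such $e$ are $\pm a/|a|$, and both give the single point $e^{i\theta}a=e^{i\theta}\Re(e^{-i\theta}\zeta)$. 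If $\scalar\zeta\zeta=0$ then $|a|=|b|$, every unit vector of $\Span\{a,b\}$ maximises $g$, and substituting $e=(\cos\gamma)\,a/|a|+(\sin\gamma)\,b/|b|$ turns the family of minimisers into $\{e^{i\phi}\Re(e^{-i\phi}\zeta)\,;\,\phi\in\R\}$, which equals $\{\tfrac12(\zeta+e^{2i\phi}\bar\zeta)\,;\,\phi\in\R\}$ since $\Re w=\tfrac12(w+\bar w)$. I expect the only real obstacle to be discovering the test functional $\Lambda$ for the lower bound on $|\zeta|_c$; once that is in place, everything else is direct substitution or finite-dimensional linear algebra in $\Span\{a,b\}$.
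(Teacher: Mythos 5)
Your proof is correct. On the key point --- the lower bound $|\zeta|_c\ge|a|+|b|$ --- your argument and the paper's are really the same extremal argument in different clothing: the paper takes an arbitrary representation $\zeta=\sum_j\alpha_j\xi_j$, writes $a=\sum_j\Re(e^{-i\theta}\alpha_j)\xi_j$ and $b=\sum_j\Im(e^{-i\theta}\alpha_j)\xi_j$, and applies Cauchy--Schwarz and Pythagoras against the orthonormal pair $e_a=a/|a|$, $e_b=b/|b|$; your functional $\Lambda(z)=\scalar z{e_1}-i\scalar z{e_2}$ packages exactly that computation, since $\Re\big(e^{-i\theta}\alpha_j\Lambda(\xi_j)\big)=\scalar{\Re(e^{-i\theta}\alpha_j)e_a+\Im(e^{-i\theta}\alpha_j)e_b}{\xi_j}$. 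The genuine divergence is in the second half: the paper proves only the cross-norm identity and disposes of the distance formula, the uniqueness of the nearest point when $\scalar\zeta\zeta\neq0$, and the circle of minimizers when $\scalar\zeta\zeta=0$ by citing Lemma~5.18 of H\"ormander's book verbatim, whereas you supply a self-contained argument (reduction to maximizing $g(e)=\scalar ae^2+\scalar be^2$ over unit vectors $e$, then finite-dimensional linear algebra in $\Span\{a,b\}$). That costs you half a page but makes the proposition independent of the external reference, and your explicit parametrization of the minimizers by $e=(\cos\gamma)a/|a|+(\sin\gamma)b/|b|$ transparently recovers the circle $\{e^{i\phi}\Re(e^{-i\phi}\zeta)\}$. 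Both routes are sound; yours is more complete as written, the paper's is shorter.
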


\begin{proof}   
Equations (\ref{eq:2.4}) follow from the fact that
$|\zeta|^2=|a|^2+|b|^2$, and $|\scalar \zeta\zeta|=|a|^2-|b|^2$.
For proving the first equality
in (\ref{eq:2.6}) we observe that $\zeta=e^{i\theta}a+e^{i\theta}ib$
  is one of the linear combinations in (\ref{eq:2.1}), so we have
$|\zeta|_c\leq |a|+|b|$ and equality holds if $b=0$.  Assume 
that $b\neq 0$ and take $e_a=a/|a|$ and $e_b=b/|b|$.
 If $\zeta= \sum_{j=1}^k \alpha_j\xi_j$ is some other 
representation of $\zeta$
as in (\ref{eq:2.1}), then
$a=\sum_{j=1}^k \Re\big(e^{-i\theta}\alpha_j\big)\xi_j$, 
 $b=\sum_{j=1}^k \Im\big(e^{-i\theta}\alpha_j\big)\xi_j$, so
the Cauchy-Schwarz inequality and  Pythagoras' theorem give  
\begin{multline*}
  |a|+|b| = \scalar a{e_a}+\scalar b{e_b}
=\sum_{j=1}^k\scalar{ \Re\big(e^{-i\theta}\alpha_j\big)e_a+
\Im\big(e^{-i\theta}\alpha_j\big)e_b}{\xi_j} \\
\leq
\sum_{j=1}^k\big|
\Re\big(e^{-i\theta}\alpha_j\big)e_a+
\Im\big(e^{-i\theta}\alpha_j\big)e_b
\big|\big|\xi_j\big|
=\sum_{j=1}^k |\alpha_j||\xi_j|.
\end{multline*}
This proves the first equality in (\ref{eq:2.6}) and the others
follow from (\ref{eq:2.4}).  
The proof of the remaining statements is identical to  the proof of
Lemma 5.18 in \cite{Hor:1998}.  
\end{proof}

Observe that the last two 
expressions for the Lie norm in (\ref{eq:2.6})  
were first proved by Dru\.zkowski
\cite{Dru:1974}.

\section{Geometry of $\C\R^N$}

\label{sec:3}

\noindent
From now on we let  $\scalar \cdot\cdot$ denote the natural bilinear form
on $\C^N$, $\scalar z\zeta =\sum_{j=1}^N z_j\zeta_j$, which means that
the Hermitian form is $(z,\zeta)\mapsto \scalar z{\bar \zeta}$.    
We let $|\cdot|$ denote the Hermitian norm on $\C^N$ and
$|\cdot|_c$ denote the Lie norm.

The set $\C\R^N$ consists of all vectors in $\C^N$ 
with parallel real and imaginary parts.    Since
$\C\R^N\setminus\{0\}$ is the inverse image of the real projective
space 
$\Pro^{N-1}(\R)$  under the natural map from
$\C^N\setminus\{0\}$ it is a real analytic manifold of dimension
$N+1$.  Moreover, the 
parametrization 
\begin{equation}
  \label{eq:3.1}
  \R\times (\R^N\setminus\{0\}) \ni(\theta,a)
\mapsto e^{i\theta}a \in \C\R^N\setminus\{0\}
\end{equation}
has an injective differential at every point  
and  $\C\R^N\setminus \{0\}$ 
is a fiber bundle over ${\mathbb S}^1$
with the projection 
\begin{equation}
  \label{eq:3.2}
  \C\R^N\setminus \{0\} \ni \zeta=e^{i\theta}a\mapsto 
\dfrac{\scalar \zeta\zeta}{|\zeta|^2}=e^{2i\theta}\in {\mathbb S}^1
\end{equation}
and fiber $\R^N\setminus\{0\}$.   
We have
$|\zeta|^2-|\scalar\zeta\zeta|=4(|\xi|^2|\eta|^2-\scalar \xi\eta^2)$
for every  $\zeta=\xi+i\eta\in \C^N$, 
which shows that $\C\R^N$ is a real algebraic variety,
\begin{equation}
  \label{eq:3.3}
  \C\R^N=\{\xi+i\eta \in \C^N \, ;\,  |\xi||\eta|=|\scalar \xi\eta|\}
=\{\zeta\in \C^N \, ;\,  |\zeta|^2=|\scalar\zeta\zeta|\}.
\end{equation}
If $\zeta\in \C\R^N$ is  written as $e^{i\theta}a$, then 
the argument $\theta$ is determined up to a multiple of $\pi$ and
the vector $a$ up to a sign.

In Section 2 we have seen how useful the $(\theta,a,b)$
coordinates are for the calculation of the Lie norm.  
We are going to use these coordinates to parameterize  
the set $\{\zeta\in \C^N \,;\, \scalar \zeta\zeta\neq 0\}$
and to express the standard volume form on $\C^N$,
\begin{equation}
   \label{eq:3.5}
dV=\big(\tfrac{i}{2}\big)^{N}
\bigwedge_{j=1}^{N}dz_j\wedge d\bar{z}_j
\end{equation}
with respect to these coordinates.

\begin{proposition}
\label{prop:3.1}
The pullback of the volume form $dV$ 
on $\C^N$, under the map 
$(\theta,a,b)\mapsto e^{i\theta}(a+ib)$ 
to the $2N$ dimensional real manifold 
\begin{equation}
  \label{eq:3.6}
  L=\{(\theta,a,b) \,;\, \theta \in \R, a,b\in \R^N, 
\scalar ab=0, |b|<|a|\} \subset \R^{2N+1}
\end{equation}
is given by the formula
\begin{equation}
\label{eq:3.7}
dV=-d\theta\wedge
  \sum_{j=1}^{N}\left(a_j\Lambda_{b_j}+b_j\Lambda_{a_j}\right) 
\end{equation}
where the $2N-1$ forms $\Lambda_{a_j}$ and $\Lambda_{b_j}$
are given by
\begin{align}
\Lambda_{a_j}&=da_1\wedge db_1\wedge\cdots\wedge
  \widehat{da_j}\wedge db_j\wedge\cdots\wedge da_{N}\wedge db_{N}
\label{eq:3.8}\\
\Lambda_{b_j}&=da_1\wedge db_1\wedge\cdots\wedge
  da_j\wedge \widehat{db_j}\wedge\cdots\wedge da_{N}\wedge db_{N},
\label{eq:3.9} 
\end{align}
and $ \widehat{da_j}$ and $\widehat{db_j}$ is the standard notation
for omitted factors. On the open subset  
$L_k:=\{(\theta,a,b)\in L \, ;\,  a_k\not=0 \}$ of $L$ 
we can express the volume form as 
\begin{equation}
\label{eq:3.10}
	dV=\frac{|b|^2}{a_{k}}d\theta\wedge
  \Lambda_{b_{k}}-d\theta\wedge
  \sum_{j=1}^{N}a_j\Lambda_{b_j}.
\end{equation}
\end{proposition}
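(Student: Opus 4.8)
The plan is to compute the pullback of $dV$ one coordinate factor at a time, wedge the results together, and then use the equation $\scalar ab=0$ defining $L$ to discard the one term that does not appear in (\ref{eq:3.7}). First I would pass to real coordinates: writing $z_j=x_j+iy_j$ one has $dz_j\wedge d\bar z_j=-2i\,dx_j\wedge dy_j$, so $dV=dx_1\wedge dy_1\wedge\cdots\wedge dx_N\wedge dy_N$. Under the parametrization $z_j=e^{i\theta}(a_j+ib_j)$ we have $x_j=a_j\cos\theta-b_j\sin\theta$ and $y_j=a_j\sin\theta+b_j\cos\theta$, and differentiating, together with the identities $x_j\cos\theta+y_j\sin\theta=a_j$ and $-x_j\sin\theta+y_j\cos\theta=b_j$, yields after a short computation the basic formula
\begin{equation*}
dx_j\wedge dy_j=da_j\wedge db_j+a_j\,da_j\wedge d\theta+b_j\,db_j\wedge d\theta .
\end{equation*}

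Next I would form the $N$-fold wedge product of these two-forms. Since they have even degree they commute, so the product expands as a sum over subsets of $\{1,\dots,N\}$; but any term involving two of the one-forms $(a_j\,da_j+b_j\,db_j)\wedge d\theta$ contains $d\theta$ twice and hence vanishes. Therefore only the term $\Omega:=da_1\wedge db_1\wedge\cdots\wedge da_N\wedge db_N$ and the terms with exactly one factor of $d\theta$ survive, and after moving $d\theta$ to the front (keeping track of the permutation signs) one obtains
\begin{equation*}
dV=\Omega-d\theta\wedge\sum_{j=1}^{N}\bigl(a_j\Lambda_{b_j}+b_j\Lambda_{a_j}\bigr).
\end{equation*}
To reach (\ref{eq:3.7}) it then suffices to show that $\Omega$ pulls back to $0$ on $L$. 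I would argue this by a dimension count: $\Omega$ does not involve $d\theta$, the vector field $\partial/\partial\theta$ is everywhere tangent to the $2N$-dimensional manifold $L$, and a top-degree form on $L$ annihilated by a nowhere-vanishing vector field must vanish. (Alternatively, one may note the identity $\bigl(\sum_j(b_j\,da_j+a_j\,db_j)\bigr)\wedge\Lambda_{b_k}=-a_k\Omega$, observe that $\sum_j(b_j\,da_j+a_j\,db_j)=d\scalar ab$ restricts to $0$ on $L$, and use that the sets $L_k$ cover $L$ since $|b|<|a|$ forces $a\ne 0$.)

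For (\ref{eq:3.10}) I would work on $L_k$ and solve the pulled-back relation $d\scalar ab=0$ for $db_k$, i.e. $db_k=-a_k^{-1}\bigl(\sum_j b_j\,da_j+\sum_{j\ne k}a_j\,db_j\bigr)$. Substituting this for the unique $db_k$-factor occurring in $\Lambda_{a_j}$ and discarding every term of the substitution that wedges to $0$ because its one-form already appears in $\Lambda_{a_j}$ — only $b_j\,da_j$ survives — one finds, uniformly for $j=1,\dots,N$, that $\Lambda_{a_j}$ restricts on $L_k$ to $-(b_j/a_k)\Lambda_{b_k}$. Multiplying by $b_j$, summing over $j$, and using $\sum_j b_j^2=|b|^2$ then turns the term $\sum_j b_j\Lambda_{a_j}$ of the formula above into $-(|b|^2/a_k)\Lambda_{b_k}$, which is exactly (\ref{eq:3.10}).

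The only genuine obstacle is sign bookkeeping in this wedge calculus: verifying that moving $da_k$, resp.\ $db_k$, into position produces $\Lambda_{b_k}$, resp.\ $\Lambda_{a_k}$, with a plus sign — a displacement past $2(k-1)$ factors — and, in the last step, checking that $\Lambda_{a_j}$ with its $db_k$-factor replaced by $da_j$ is literally $\Lambda_{b_k}$, once more because the rearrangement moves a one-form past an even number of factors. These verifications are routine but must be done with care.
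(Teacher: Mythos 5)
Your proposal is correct and follows essentially the same route as the paper: pull back each factor $dx_j\wedge dy_j$ (the paper uses $\tfrac i2\,dz_j\wedge d\bar z_j$, which gives the identical two-form $da_j\wedge db_j - a_j\,d\theta\wedge da_j - b_j\,d\theta\wedge db_j$), wedge over $j$, discard $\bigwedge_j da_j\wedge db_j$ using the constraint $\scalar ab=0$, and derive \eqref{eq:3.10} from the relation $\sum_j(b_j\,da_j+a_j\,db_j)=0$ on $L_k$, exactly as in the paper's identity $b_j\Lambda_{a_j}=-\tfrac{b_j^2}{a_k}\Lambda_{b_k}$. Your primary justification that $\Omega$ vanishes on $L$ (contraction with the tangent field $\partial/\partial\theta$ kills a top-degree form) is a slightly slicker variant, but your stated alternative is precisely the paper's argument, so the proofs coincide in substance.
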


\begin{proof}
We have $z_j=e^{i\theta}(a_j+ib_j)$ and
$\bar{z}_j=e^{-i\theta}(a_j-ib_j)$, so
\begin{align*}
dz_j&=ie^{i\theta}\big((a_j+ib_j)d\theta-ida_j+db_j\big) 
\quad \text {and} \quad  \\
d\bar{z}_j&=-ie^{-i\theta}\big((a_j-ib_j)d\theta+ida_j+db_j\big)
\end{align*}
and we get
\begin{align}
dz_j\wedge d\bar{z}_j
=-2i(-a_jd\theta\wedge da_j-b_jd\theta\wedge db_j+da_j\wedge db_j).\label{eq:3.10a}
\end{align}
Now equation (\ref{eq:3.7}) 
follows by wedging equation (\ref{eq:3.10a}) over all $j\in\{1,\dots,N\}$, and noticing that 
$\bigwedge_{j=1}^{N} da_j\wedge db_j=0$, which follows
from the equation $a_1b_1+\cdots a_{N}b_{N}=0$.
Furthermore, this equation implies that 
$b_j\Lambda_{a_j}=-\frac{b_{j}^2}{a_{k}}\Lambda_{b_{k}}$
on  $L_{k}$  for every $j$ and $k$. 
If we combine this with (\ref{eq:3.7}), then (\ref{eq:3.10}) 
follows.
\end{proof}

In some cases the variables 
$\theta,a_1,\dots,a_{N},b_1,\dots,b_{N}$ on $L$
are  inconvenient to work with  because they are 
dependent through the equality $\langle a,b\rangle=0$. Therefore we
define the set 
\begin{align*}
\tilde{L}=\{(\theta,a,\beta)\in  \R\times \R^{N}\times
  \R^{N-1} \, ;\,  |\beta|<|a|\} 
\end{align*}
and present the following proposition.

\begin{proposition}\label{prop:3.2}
For each $m\in \{1,\dots,N\}$ there exists a change of
variables  $L_m:=\{(\theta,a,b)\in L \,;\, a_m\neq 0 \}\to \tilde{L}$,
$(\theta,a,b)\mapsto (\theta,a,\beta)$
such that $|\beta|=|b|$ and
\begin{gather*}
dV=-(-1)^{\frac{N(N+1)}{2}}
\big(|a|-\tfrac{|\beta|^2}{|a|}\big)d\theta\wedge dV_a
\wedge dV_{\beta}, 
\quad \text{ where } \\
dV_a=da_1\wedge\cdots\wedge da_{N}
\qquad \text{and}\qquad 
dV_{\beta}=d\beta_1\wedge\cdots\wedge d\beta_{N-1}.
\end{gather*}
\end{proposition}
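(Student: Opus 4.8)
The plan is to build the change of variables from a smoothly varying orthonormal frame and then to pull the volume form back to $\tilde L$ directly, rather than transporting formula~\eqref{eq:3.10}. I would fix $m$ and, on the open set $\{a\in\R^N\,;\,a_m\neq0\}$, apply the Gram--Schmidt process to the linearly independent vectors $a,\ev_1,\dots,\widehat{\ev_m},\dots,\ev_N$ (with $\ev_1,\dots,\ev_N$ the standard basis). This yields a smooth orthonormal frame $u_1(a),\dots,u_N(a)$ with $u_1(a)=a/|a|$, so that $u_2(a),\dots,u_N(a)$ is an orthonormal basis of the hyperplane $a^\perp=\{x\in\R^N\,;\,\scalar xa=0\}$; replacing $u_N$ by $-u_N$ on each of the two components of $\{a_m\neq0\}$ if necessary, I may assume $\det[\,u_1(a)\,|\,\cdots\,|\,u_N(a)\,]=-1$ throughout. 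The change of variables is then $(\theta,a,b)\mapsto(\theta,a,\beta)$ with $\beta_i=\scalar b{u_{i+1}(a)}$, whose inverse is $b=\phi_a(\beta):=\sum_{i=1}^{N-1}\beta_iu_{i+1}(a)$. Since $\beta\mapsto\phi_a(\beta)$ is a linear isometry of $\R^{N-1}$ onto $a^\perp$, the equality $|\beta|=|b|$ is immediate and the constraint $|b|<|a|$ becomes $|\beta|<|a|$; the map is a diffeomorphism of $L_m$ onto $\{(\theta,a,\beta)\in\tilde L\,;\,a_m\neq0\}$, a full-measure open subset of $\tilde L$, which is all the later integrations require.

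Next I would compute the pullback of $dV$ under $F(\theta,a,\beta)=e^{i\theta}\big(a+i\phi_a(\beta)\big)$. Since $\tilde L$ is $2N$-dimensional, $F^*dV=J\,d\theta\wedge dV_a\wedge dV_\beta$ for a function $J$, which is read off by evaluating $F^*dV$ on the coordinate fields $\partial_\theta,\partial_{a_1},\dots,\partial_{a_N},\partial_{\beta_1},\dots,\partial_{\beta_{N-1}}$. Writing $b=\phi_a(\beta)$ and $\partial_{a_l}b=\sum_i\beta_i\partial_{a_l}u_{i+1}(a)$, one computes
\begin{equation*}
dF(\partial_\theta)=e^{i\theta}(-b+ia),\qquad
dF(\partial_{a_l})=e^{i\theta}(\ev_l+i\,\partial_{a_l}b),\qquad
dF(\partial_{\beta_i})=ie^{i\theta}u_{i+1}(a),
\end{equation*}
and $J$ equals, up to the fixed sign $(-1)^{N(N-1)/2}$ produced by passing from the coordinate order $x_1,y_1,\dots,x_N,y_N$ underlying $dV$ to the order $x_1,\dots,x_N,y_1,\dots,y_N$ used below, the real $2N\times2N$ determinant whose columns are these vectors written with real parts stacked above imaginary parts. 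Factoring the scalar $e^{i\theta}$ (of real determinant $1$) out of every column reduces matters to $\theta=0$ and confirms that $J$ does not depend on $\theta$.

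The core step is to evaluate this determinant, and it collapses quickly. The columns $dF(\partial_{a_l})$ contribute the real-part block $[\,\ev_1\,|\,\cdots\,|\,\ev_N\,]=I_N$, while the columns $dF(\partial_{\beta_i})$ have vanishing real part; subtracting suitable multiples of the $dF(\partial_{a_l})$ from $dF(\partial_\theta)$ kills its real part and replaces it by the vector with zero real part and imaginary part $w:=a+\sum_lb_l\,\partial_{a_l}b$. A Laplace expansion along the first $N$ rows keeps only the $I_N$ block and leaves, up to the sign $(-1)^N$, the determinant $\det[\,w\,|\,u_2(a)\,|\,\cdots\,|\,u_N(a)\,]$. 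Since $(u_1,\dots,u_N)$ is an orthonormal frame with $u_1=a/|a|$ and $\det[\,u_1\,|\,\cdots\,|\,u_N\,]=-1$, this equals $-\scalar w{u_1}=-\scalar wa/|a|$ — so the derivatives $\partial_{a_l}u_{i+1}$ themselves are never needed. Finally, differentiating the identity $\scalar{\phi_a(\beta)}{a}=0$ with respect to $a_l$ gives $\scalar{\partial_{a_l}b}{a}=-b_l$, whence $\scalar wa=|a|^2-\sum_lb_l^2=|a|^2-|b|^2=|a|^2-|\beta|^2$. The product of the three sign contributions $(-1)^{N(N-1)/2}$, $(-1)^N$ and $-1$ is $-(-1)^{N(N+1)/2}$, so $J=-(-1)^{N(N+1)/2}\big(|a|-|\beta|^2/|a|\big)$, which is the asserted formula.

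The individual steps are routine, so the point at which I expect to spend the most care is precisely the sign bookkeeping: matching the product of the permutation sign from the real/imaginary reordering, the Laplace sign, and the prescribed orientation of the frame against the stated $-(-1)^{N(N+1)/2}$, together with checking that a smooth orthonormal frame with $u_1=a/|a|$ and that orientation really does exist on all of $L_m$ (in the case $N=1$, where the $\beta$-factor is trivial, one checks the formula directly). An alternative but more cumbersome route would substitute the new coordinates directly into~\eqref{eq:3.10} instead of introducing $F$; that forces one to handle the dependent differential $db_m$ on $L_m$ and is messier.
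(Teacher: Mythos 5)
Your proof is correct, and although it is built on the same change of variables as the paper's --- an orthonormal frame of $a^{\perp}$ produced by Gram--Schmidt from $a$ and the standard basis vectors other than $\ev_m$, with $\beta$ the coordinates of $b$ in that frame, so that $|\beta|=|b|$ comes for free --- the determinant computation follows a genuinely different route. The paper never differentiates the composite map: it recycles formula (\ref{eq:3.10}) from Proposition \ref{prop:3.1} and only has to convert each $\Lambda_{b_s}$ into a multiple of $dV_a\wedge dV_{\beta}$, which it does by identifying the relevant $(N-1)\times(N-1)$ minor of the orthogonal matrix $U_a$ with a cofactor, equal to $a_s/|a|$ once $\det U_a=1$. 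You instead bypass Proposition \ref{prop:3.1} entirely and evaluate the $2N\times 2N$ real Jacobian of $F(\theta,a,\beta)=e^{i\theta}\big(a+i\phi_a(\beta)\big)$ directly; the column operation producing $w=a+\sum_l b_l\partial_{a_l}b$ together with the identity $\scalar{\partial_{a_l}b}{a}=-b_l$ (from differentiating $\scalar ba=0$) is exactly what makes the derivatives of the frame drop out, and your sign bookkeeping checks: the product $(-1)^{N(N-1)/2}\cdot(-1)^{N}\cdot(-1)=-(-1)^{N(N+1)/2}$ is right, as I verified both in general and at sample points. Your version is slightly longer but self-contained, and it makes explicit the one genuine subtlety, namely that the sign of the formula depends on the orientation chosen for the frame on $a^{\perp}$: your normalization ($u_1=a/|a|$, determinant $-1$) lands exactly on the stated $-(-1)^{N(N+1)/2}$, whereas the paper's normalization ($a/|a|$ as the \emph{last} column, $\det U_a=+1$), followed literally through (\ref{eq:3.10}) and (\ref{eq:3.11}), produces $-(-1)^{N(N-1)/2}$, i.e.\ an extra factor $(-1)^{N}$ --- so pinning down the orientation as you do is not a cosmetic step. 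Two minor remarks: your map is a diffeomorphism onto the full-measure open subset $\{a_m\neq 0\}$ of $\tilde{L}$ rather than onto all of $\tilde{L}$, which, as you note, suffices for the later integrations (the paper's map has the same feature); and for $N=1$ the ``direct check'' you defer to gives $dV=-a\,d\theta\wedge da$, which matches $|a|\,d\theta\wedge da$ only on $\{a<0\}$ --- harmless, since the paper only ever uses $N=n+1\geq 2$.
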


\begin{proof}
For each $a$ with $a_m\neq 0$ we can define
an orthogonal matrix $U_a=(u_{j,k}(a))$ whose last column is parallel
to $a$ and  the entries of the matrix $U_a$ 
are smooth as functions of $a$. 
This can be done by applying the Gram-Schmidt method on the 
basis 
$\{a,e_1,\dots,e_N\}\setminus \{e_m\}$,
where $\{e_1,\dots,e_N\}$ is the standard basis.
Now we define the change of variables 
$L_m\to \tilde{L}$, $(\theta,a,b)\mapsto (\theta,a,\beta)$
by 
\begin{align*}
  (\beta_1,\dots,\beta_{N-1},0)^t=U_a^{-1}(b_1,\dots,b_{N})^t.
\end{align*}
Since $U_a$ is orthogonal its cofactors are given by the
equation $C_{j,k}=u_{j,k}$, in particular we have
$C_{j,N}=a_{j}/|a|$. 
Therefore, for any $s\in \{1,\dots,N \}$, we have
\begin{align}
(-1)^{\frac{N(N-1)}{2}}&\Lambda_{b_{s}}
=(-1)^{s+N}da_1\wedge\dots\wedge da_{N}\wedge
db_1\wedge\dots\wedge\widehat{db_s}
\wedge\dots\wedge db_{N} \nonumber\\ 
	&=(-1)^{s+N}dV_a \bigwedge_{j=1,j\not=s}^{N}\left(\sum_{k=1}^{N-1}(u_{j,k}d\beta_k+\beta_kdu_{j,k})\right)\nonumber\\
	&=(-1)^{s+N}dV_a\bigwedge_{j=1,j\not=s}^{N}\left(\sum_{k=1}^{N-1}u_{j,k}d\beta_k\right)=C_{s,N}dV_a\wedge dV_{\beta}\nonumber\\
	&=\frac{a_s}{|a|}dV_a\wedge dV_{\beta}. \label{eq:3.11}
\end{align}
The result follows by combining equation 
(\ref{eq:3.11}) with equation (\ref{eq:3.10}).
\end{proof}

\section{Applications}
\label{sec:4}

\noindent
In this section we apply Theorem \ref{th:1.1} to give explicit
formulas for the Siciak-Zakharyuta extremal functions and the
corresponding equilibrium measures for a few examples of compact
sets. We have grouped the examples into three classes; namely we
consider compact subsets of the Euclidean space $\C^n$, the projective
space $\Pro^n(\C)$, and the complex torus $\C^n/\Z^n$. For the rest of
the section we write $\R_+:=[0,\infty)$. 

Recall that the extremal function of a compact set $K\subset \C^n$ is defined by the equation
\begin{equation*}
V_{K}(z)=\sup\{u(z)\,;\, u\in L(\C ^n),\; u|_K\leq 0 \},\qquad z\in \C^n, 
\end{equation*}
where $L(\C^n)$ is the Lelong class of all psh functions $u$ on
$\C^n$ satisfying the growth condition $u(z)-\log|z|\leq O(1)$ as
$|z|\to\infty$. The Monge-Amp\`ere measure $(dd^c V_K)^n$ is called the equilibrium measure of $K$. 

The Lelong class $L(X)$ on an $S$-parabolic manifold $X$ with a
parabolic potential $\tau$ is defined to be the class of psh functions
$u$ on $X$ satisfying the growth condition $u\leq\tau+ C$ on $X$ for
some constant $C>0$ depending  on $u$, see for example
\cite{Ayt:2011,Ayt:2014, Ayt:2016,Sna:2018,Zer:1991,Zer:1996,Zer:2000}. The
extremal function and the equilibrium measure for a compact set
$K\subset X$ is then defined analogously as in the case when $X=\C^n$,
with $L(\C^n)$ replaced by $L(X)$. In the case of the torus
$X=\C^n/\Z^n$, it is simple to show that
$\tau(z):=|\operatorname{Im}(z)|$ is a parabolic potential, so
$L(\C^n/\Z^n)$ is the class of psh functions $u$ satisfying the growth
condition $u(z)-|\operatorname{Im}(z)|\leq O(1)$ as
$|\operatorname{Im}(z)|\to \infty$.

Denote by $\omega$ the Fubini-Study metric on $\Pro^n(\C)$. If we
imbed $\C^n$ into 
$\Pro^n(\C)$ in the canonical way $(z_1,\dots,z_n)\mapsto 
[1\mathpunct{:}z_1\mathpunct{:}\cdots\mathpunct{:}z_n]$ then
$\omega$ has a psh  potential on $\C^n$, namely 
\begin{align*}
\omega= \tfrac 12 dd^c\log(1+|z|^2),\qquad z\in \C^n.
\end{align*}
We define the class $\operatorname{PSH}(\Pro^n(\C),\omega)$ 
of $\omega$-psh functions on $\Pro^n(\C )$ 
as the set of all $\varphi\in L^1( \Pro^n(\C))$, which can locally be
written as the sum of a smooth function and a psh function
and $dd^c\varphi+\omega\geq 0$.
For a compact set $K\subset \Pro^n(\C)$ we define the
$\omega$-extremal function for $K$ to be 
\begin{align*}
V_{K,\omega}(z)=\sup\{\varphi(z)\,;\, 
\varphi\in\operatorname{PSH}(\Pro^n(\C),\omega),\; \varphi|_K\leq 0  \}
\end{align*}
and the $\omega$-equilibrium measure for $K$ is $(dd^c
V_{K,\omega})^n$.  For a detailed survey on $\omega$-psh functions 
we refer to Guedj and Zeriahi \cite{Gue:2017}. See also 
Magn\'usson \cite{Mag:2011,Mag:2012}.

\subsection*{Compact subsets of $\R^n\subset \C^n$}
\label{subsec:4.1}

Although Theorem \ref{th:1.1} can be applied to calculate the extremal function for a variety of compact subsets of $\C^n$ we restrict our attention to compact subsets of $\R^n$ simply because authors in the past have shown particular interest in this case.

\begin{lemma}\label{lem:5.1}
	Let $f=(f_0,\dots,f_{n})$ be a holomorphic map on $U\subset
        \C^n$  
satisfying $f_0+\cdots+f_{n}=1$ and write 
	\begin{align*}
	K=\{z\in U\,;\, f_j(z)\in \R_+\; \text{for all }\; j \}.
	\end{align*}
	Then the function
	\begin{align*}
	\psi:&=2\log|(\sqrt{f_0},\dots,\sqrt{f_{n}})|_c\\
	&=\log\left(|f_0|+\cdots+|f_n|+\left((|f_0|+\cdots+|f_n|)^2-1\right)^{1/2} \right)			
	\end{align*}
	equals $0$ on $K$ and is maximal on $U\setminus K.$ We have
	\begin{align}
	(dd^c\psi)^n|_K=\pm\frac{n!\Omega_n}{\sqrt{f_0\cdots f_{n}}}df_1\wedge\cdots\wedge df_n.\label{jafna2}
	\end{align}
\end{lemma}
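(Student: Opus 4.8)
The plan is to apply Theorem~\ref{th:1.1} directly to the holomorphic map
$\Phi=(\sqrt{f_0},\dots,\sqrt{f_n})\colon U\to\C^{n+1}$, after first checking that
the square roots can be chosen holomorphically and that $\Phi$ avoids the origin.
Since $f_0+\cdots+f_n=1$, the $f_j$ cannot all vanish simultaneously, so locally
(shrinking $U$ if necessary, or passing to components where exactly one $f_j$ is
nonvanishing) we may fix holomorphic branches $\sqrt{f_j}$; the statement is local
so this is harmless. Then $\Phi_0^2+\cdots+\Phi_n^2=f_0+\cdots+f_n=1$, which is the
key simplification: the weight $(\Phi_0^2+\cdots+\Phi_n^2)^{-(n+1)/2}$ in
\eqref{eq:1.1} becomes $\equiv 1$, and the holomorphic square root of that function
(with the prescribed argument) is just $1$. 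By definition $\psi=2\log|\Phi|_c=
\log|\Phi|_c^2$, and the second displayed formula for $\psi$ follows from the third
expression for the Lie norm in \eqref{eq:2.6} applied to $\Phi$: one has
$|\Phi|^2=|f_0|+\cdots+|f_n|$ and $|\scalar\Phi\Phi|=|f_0+\cdots+f_n|=1$, so
$|\Phi|_c^2=|\Phi|^2+(|\Phi|^4-1)^{1/2}$, which is exactly the claimed expression.

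Next I would identify $K$. The set $\C\R^{n+1}$ consists of vectors $e^{i\theta}a$
with $a\in\R^{n+1}$; a vector $(\sqrt{f_0},\dots,\sqrt{f_n})$ lies in $\C\R^{n+1}$
precisely when, after a common phase rotation, all coordinates are real, i.e. all
$f_j$ have a common argument. Combined with $\sum f_j=1$, the common argument must
be $0$, forcing all $f_j\in\R_+$; hence $\Phi^{-1}(\C\R^{n+1})=K$. On $K$ we have
$|\Phi|^2=\sum|f_j|=\sum f_j=1$, so $|\Phi|_c=1$ and $\psi=0$ there. Off $K$, the
map $\Phi$ takes values in $\C^{n+1}\setminus\C\R^{n+1}$, and by Theorem~\ref{th:1.1}
the function $\log|\Phi|_c$, hence $\psi$, is a maximal $C^\infty$ function on
$U\setminus K$ (the set $\A_\Phi$ has no bearing here since it is contained in
$\Phi^{-1}(\C\R^{n+1})=K$ once $d\Phi$ has full rank, as I discuss below).

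For the Monge--Amp\`ere formula \eqref{jafna2}, by Theorem~\ref{th:1.1}
$(dd^c\psi)^n=(dd^c\log|\Phi|_c^2)^n=2^n(dd^c\log|\Phi|_c)^n$ is the current of
integration along $M=\Phi^{-1}(\C\R^{n+1})\setminus\A_\Phi$ of the $n$-form in
\eqref{eq:1.1}. Wait --- I must be careful with the factor $2$: $\psi=2\log|\Phi|_c$,
and $(dd^c(2u))^n=2^n(dd^c u)^n$, which will modify the constant. Let me instead
compute directly: writing $g_j=\sqrt{f_j}$, so $g_j^2=f_j$ and $2g_j\,dg_j=df_j$,
the form \eqref{eq:1.1} for $\Phi=(g_0,\dots,g_n)$ with the $\equiv1$ weight is
$C_n\sum_j(-1)^{j+1}g_j\,dg_0\wedge\cdots\wedge\widehat{dg_j}\wedge\cdots\wedge dg_n$.
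Using $dg_k=df_k/(2g_k)$ for $k\neq j$, the $j$-th term equals
$(-1)^{j+1}g_j\cdot\bigl(\prod_{k\neq j}(2g_k)^{-1}\bigr)\,df_0\wedge\cdots\wedge
\widehat{df_j}\wedge\cdots\wedge df_n$, and $g_j/\prod_{k\neq j}g_k=
g_0\cdots g_n/\prod_k g_k^2\cdot g_j^2=\dots$; cleanly, $g_j\prod_{k\neq j}g_k^{-1}
=\bigl(\prod_k g_k\bigr)^{-1}g_j^2=f_j/\sqrt{f_0\cdots f_n}$. So each term is
$2^{-n}(-1)^{j+1}(f_j/\sqrt{f_0\cdots f_n})\,df_0\wedge\cdots\wedge\widehat{df_j}
\wedge\cdots\wedge df_n$. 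Now using $\sum_j df_j=0$ (from $\sum f_j=1$) one checks
that $\sum_j(-1)^{j+1}f_j\,df_0\wedge\cdots\wedge\widehat{df_j}\wedge\cdots\wedge
df_n=\bigl(\sum_j f_j\bigr)\,df_1\wedge\cdots\wedge df_n=df_1\wedge\cdots\wedge df_n$
by expanding $df_0=-\sum_{j\geq1}df_j$ in the $j=0$ term and reconciling. Hence
$(dd^c\psi)^n=2^n\cdot 2^{-n}C_n(f_0\cdots f_n)^{-1/2}df_1\wedge\cdots\wedge df_n$
restricted to $M=K$, and with $C_n=(-1)^{n(n-1)/2}n!\Omega_n$ one absorbs the sign
(which also accounts for the orientation of $M$ and the choice of branch of
$\sqrt{f_0\cdots f_n}$) into the $\pm$, giving \eqref{jafna2}.

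The main obstacle is bookkeeping rather than conceptual: tracking the sign and the
factor of $2$, verifying that the pushforward of the restricted form under the
identification $M=K$ really is integration against $(f_0\cdots f_n)^{-1/2}
df_1\wedge\cdots\wedge df_n$ with a consistent orientation, and confirming that the
branch of $\sqrt{f_0\cdots f_n}$ dictated by Theorem~\ref{th:1.1} (same argument as
the vector $\Phi$ in $\C\R^{n+1}$) is real and positive on $K$ so that the right-hand
side of \eqref{jafna2} is a genuine real measure there --- all of which is why the
final constant is stated only up to sign. One should also remark that the excluded
set $\A_\Phi$ and the possibility that $M$ is empty (when $K=\emptyset$, or when
$d\Phi$ degenerates) are handled trivially, since then both sides of \eqref{jafna2}
vanish or the statement is vacuous.
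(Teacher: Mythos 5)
Your overall strategy coincides with the paper's: apply Theorem~\ref{th:1.1} to $\Phi=(\sqrt{f_0},\dots,\sqrt{f_n})$, use $\sum_j\Phi_j^2=\sum_j f_j=1$ to kill the weight in \eqref{eq:1.1}, identify $\Phi^{-1}(\C\R^{n+1})$ with $K$ via the normalization $\sum f_j=1$, and convert the $n$-form by $df_j=2\Phi_j\,d\Phi_j$. Your bookkeeping of the factor $2^n$ from $\psi=2\log|\Phi|_c$ against the $2^{-n}$ from the substitution $d\Phi_k=df_k/(2\sqrt{f_k})$, and the reduction of $\sum_j(-1)^{j+1}f_j\,\Lambda_{f_j}$ to $\pm\,df_1\wedge\cdots\wedge df_n$ using $\sum_j df_j=0$, are correct and in fact more explicit than what the paper writes.

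There is, however, one genuine gap: your claim that, because the $f_j$ cannot all vanish simultaneously, one can ``fix holomorphic branches $\sqrt{f_j}$'' locally. This is false at points where some single $f_j$ vanishes to odd order (e.g.\ $f_j(z)=z_1$ near $z_1=0$): no holomorphic branch of $\sqrt{f_j}$ exists there, regardless of the behavior of the other components. So Theorem~\ref{th:1.1} simply does not apply on the set $S=\{z\in U\,;\,f_0(z)\cdots f_n(z)=0\}$, and your argument establishes plurisubharmonicity and maximality of $\psi$ only on $U\setminus(K\cup S)$, not on $U\setminus K$ as claimed. The paper closes exactly this hole: it observes that $S$ is pluripolar (being the zero set of a holomorphic function), that the locally bounded function $\psi$ extends as a psh function across $S$, and that $(dd^c\psi)^n$ charges no mass on the pluripolar set $S$, whence maximality propagates to all of $U\setminus K$. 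Without some such removable-singularity step your proof does not yield the statement as formulated; with it, the rest of your argument goes through. (A smaller inaccuracy: your parenthetical that $\A_\Phi$ is ``contained in $\Phi^{-1}(\C\R^{n+1})=K$'' is not justified and not needed; what matters is that $(dd^c\psi)^n$ puts no mass on $\A_\Phi$, which Theorem~\ref{th:1.1} already asserts.)
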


\begin{proof}
It is easy to see that $\psi=0$ on $K$. First we show that
$K=f^{-1}( \C\R^{n+1}_+)$, where 
$\C\R^{n+1}_+=\{ \alpha a\, ;\, \alpha \in
\C, a\in [0,+\infty)^{n+1}\}$. We clearly have $K\subset
f^{-1}(\C\R^{n+1}_+)$. Conversely, if $z\in f^{-1}(\C\R^{n+1}_+)$
there exist positive real numbers $x_0,\dots,x_n$ and $\theta\in
[0,2\pi]$ such that 
	\begin{align*}
	f_0(z)=x_0e^{i\theta},\; f_1(z)=x_1e^{i\theta},\;\;\dots\;\;,f_n(z)=x_ne^{i\theta}.
	\end{align*}
	Adding all the equations gives
	\begin{align*}
	1=f_0(z)+\cdots+f_n(z)=e^{i\theta}(x_0+\cdots+x_n)
\end{align*}
so $e^{i\theta}=1$ which means that $f(z)\in \R^{n+1}_+$ so
        $z\in K$. Now we apply Theorem \ref{th:1.1} with
        $\Phi=(\sqrt{f_0},\dots,\sqrt{f_n})$. We clearly have
        $\Phi^{-1}(\C\R^{n+1})=f^{-1}(\C\R_+^{n+1})=K$ and since
        $\Phi$ is a holomorphic map in a
        neighborhood of any point outside the set $S:=\{z\in
        U\,;\, f_0(z)\cdots f_n(z)=0 \}$ we see that the function
        $\psi=\log|\Phi|_c^2$ is psh and maximal on
        $U\setminus(K\cup S)$. But since $S$ is pluripolar we can
        conclude that $\psi$ is psh and maximal on $U\setminus
        K$. By simply substituting $\Phi_j$ with $\sqrt{f_j}$ in
        equation (\ref{eq:1.1}) from Theorem \ref{th:1.1} we get the
        equation 
\begin{align*}
(dd^c \psi)^n|_K
=\pm\frac{n!\Omega_n}{\sqrt{f_0\cdots
 f_{n}}}\left(\sum_{j=0}^{n}(-1)^{j+1}f_j\Lambda_{f_j}\right). 
\end{align*}
Using the assumption $f_0+\cdots+f_n=1$ it is easy to show that 
\begin{align*}
  \sum_{j=0}^{n}(-1)^{j+1}f_j\Lambda_{f_j}=df_1\wedge\cdots\wedge df_n
\end{align*}
and equation (\ref{jafna2}) follows.
\end{proof}
At this point we should point out the similarities to the Joukovski
transformation $J\colon\C\to \C$ defined by the formula
$J(z)=\frac{1}{2}(z+\frac{1}{z})$. It maps $\C\setminus
\overline{\D}(0,1)$ bijectively to $\C\setminus [-1,1]$ and its
inverse is given with the formula  
\begin{align*}
	J^{-1}(z)=z+(z^2-1)^{1/2},
\end{align*}
where the square root is chosen such that $J^{-1}(x)>1$ for $x>1$. The function $\psi$ from Lemma \ref{lem:5.1} can now be expressed as
\begin{align*}
	\psi(z)=\log J^{-1}(|f_0|+\cdots+|f_n|).
\end{align*}
We therefore see that our results, along with the extra condition
$f_0+\cdots+f_n=1$ brings us to a familiar territory, as the Joukovski
map has been extensively studied for a long time. We include three
examples which show the simplicity of this application of Theorem
\ref{th:1.1}. 
Two of them can be found in Klimek \cite{Kli:1991}, Section 5.4.
\begin{example} 
  \label{ex:5.1}
\rm \textbf{(The simplex).}
It is easy to see that
\begin{align*}
K:=\{z\in\C^n\,;\,z_j\in \R_+,\; \text{for all }\; j\; \text{ and }\; 1-z_1-\cdots-z_n\in \R_+ \}
\end{align*}
is the simplex in $\R^n\subset \C^n$. By applying Lemma \ref{lem:5.1}
we see that the extremal function for $K$ is $V_K(z)=\log J^{-1}(|f_0|+\cdots+|f_n|)$ where
\begin{align*}
f_0(z)=1-z_1-\cdots-z_n,\qquad \text{ and }\qquad f_j(z)=z_j,\;\;\;1\leq j\leq n.
\end{align*}
Using equation (\ref{jafna2}) we see that the equilibrium measure for $K$ is 
\begin{align*}
(dd^cV_K)^n|_K=\frac{ n!\Omega_n}{\sqrt{x_1\cdots x_n\cdot(1-x_1-\cdots-x_n)}}dx_1\wedge\cdots\wedge dx_n.
\end{align*}
\end{example}

\begin{example}  \rm
  \label{ex:5.3}
\textbf{(The unit ball).} Let
\begin{align*}
K=\{x\in \R^n\,;\,  1-x_1^2-\cdots-x_n^2\geq 0 \}
\end{align*}
be the unit ball in $\R^n\subset \C^n$. By Lemma \ref{lem:5.1} we see that the extremal function for $K$ is $V_K(z)=\log J^{-1}(|f_0|+\cdots+|f_n|)$ where
\begin{align*}
f_0(z)=1-z^2_1-\cdots-z^2_n,\qquad \text{ and }\qquad f_j(z)=z^2_j,\;\;\;1\leq j\leq n.
\end{align*}
Using equation (\ref{jafna2}) we see that the equilibrium measure for $K$ is 
\begin{align*}
(dd^cV_K)^n|_K=\frac{2^nn!\Omega_n}{\sqrt{1-x^2_1-\cdots-x^2_n}}dx_1\wedge\cdots\wedge dx_n.
\end{align*}
\end{example}

\begin{example}
  \label{ex:5.4}
\rm
\textbf{(The first and third quadrant of a disk).} Let
\begin{align*}
K=\{(x_1,x_2)\in \R^2\,;\,  1-x_1^2-x_2^2\geq0,\;\; x_1\cdot x_2\geq 0 \}
\end{align*}
be the union of the first and third quadrant of the unit 
disk in $\R^2\subset \C^2$. It is easy to show that
\begin{align*}
K=\{z\in \C^2\,;\, 1-z_1^2-z_2^2\in \R_+
,\;\; (z_1-z_2)^2\in \R_+,\;\; 2z_1z_2\in \R_+ \},
\end{align*}
and therefore $V_K(z)=\log J^{-1}(|f_0|+|f_1|+|f_2|)$ where
\begin{align*}
f_0(z)=1-z^2_1-z^2_2,\qquad f_1(z)=(z_1-z_2)^2\qquad 
\text{and}\qquad f_2(z)=2z_1z_2.
\end{align*}
By equation (\ref{jafna2}) we see that
\begin{align*}
(dd^cV_K)^2|_K
=\frac{ 4\sqrt{2}\pi|x_1+x_2|}
{\sqrt{x_1x_2(1-x^2_1-x^2_2)}}dx_1\wedge dx_2.
\end{align*}
\end{example}

\subsection*{Compact subsets of the complex projective space
  $\Pro^n(\C)$}
\label{subsec:4.2} We denote by $\pi:\C^{n+1}\setminus \{0\}\to
\Pro^n(\C)$ the usual projection
$\pi(z_0,\dots,z_n)=[z_0\mathpunct{:}\cdots\mathpunct{:}z_n].$  
We start with a lemma very
similar to Lemma \ref{lem:5.1}. 
\begin{lemma}\label{lem:5.5}
	Let $f_0,\dots,f_n$ be homogeneous polynomials in $\C^{n+1}$
        of degree $2k$ satisfying the equation
        $f_0+\cdots+f_n=(z_0^2+\cdots+z_n^2)^{k}$ and write  
	\begin{align*}
K=\pi\big(\{z\in \C^{n+1} \setminus\{0\} \,;\,
          (f_0(z),\dots,f_n(z))\in \R^{n+1}_+ 
          \}\big)\subset \Pro^n(\C). 
	\end{align*}
	Then the function
	\begin{align*}
\psi:=\log|(\sqrt{f_0},\dots,\sqrt{f_n})|_c^{1/k}
-\log|(z_0,\dots,z_n)| 
	\end{align*}
	equals $0$ on $K$ and $(dd^c\psi+\omega)^n=0$ on
        $\Pro^n(\C)\setminus K$. We have 
	\begin{align*}
	(dd^c\psi+\omega)^n|_K=\frac{n!\Omega_n}{2^n|z_0^2+\cdots+z_n^2|^{\frac{k(n+1)}{2}}\sqrt{f_0\cdots
          f_{n}}}\left(\sum_{j=0}^{n}(-1)^{j+1}f_j\Lambda_{f_j}\right). 
	\end{align*}
\end{lemma}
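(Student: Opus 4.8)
The plan is to follow the proof of Lemma~\ref{lem:5.1}, reducing to Theorem~\ref{th:1.1} by working in the affine charts of $\Pro^n(\C)$. I would first check that $\psi$ is well defined on $\Pro^n(\C)$ and $\omega$-psh. Since each $f_j$ is homogeneous of degree $2k$, for any fixed branch the vector $(\sqrt{f_0},\dots,\sqrt{f_n})$ is homogeneous of degree $k$, so $\log|(\sqrt{f_0},\dots,\sqrt{f_n})|_c^{1/k}-\log|z|$ is invariant under $z\mapsto\lambda z$ for $\lambda\in\C^*$ and descends to $\Pro^n(\C)$. Locally, as the next step shows, $\psi$ equals $\tfrac1k\log|\Phi|_c$ minus a smooth function, so it is locally the sum of a psh function and a smooth function and $dd^c\psi+\omega\geq0$; hence $\psi\in\operatorname{PSH}(\Pro^n(\C),\omega)$.

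Fix a chart $U_i=\{z_i\neq0\}$ with the section $z_i=1$ and affine coordinates $w$, and write $\tilde f_j(w)$ for the restriction of $f_j$ to this section; then $\tilde f_0+\cdots+\tilde f_n=\tilde g^k$, where $\tilde g$ is the restriction of $z_0^2+\cdots+z_n^2$. On $U_i$ one has $\psi=\tfrac1k\log|\Phi|_c-\tfrac12\log(1+|w|^2)$ with $\Phi=(\sqrt{\tilde f_0},\dots,\sqrt{\tilde f_n})$, which is a holomorphic map into $\C^{n+1}\setminus\{0\}$ on a neighbourhood of any point off the analytic set $S=\{f_0\cdots f_n=0\}$. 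Since $\tfrac12\log(1+|w|^2)$ is a potential for $\omega$ on $U_i$, this gives $dd^c\psi+\omega=\tfrac1k\,dd^c\log|\Phi|_c$ on $U_i\setminus S$, whence $(dd^c\psi+\omega)^n=k^{-n}(dd^c\log|\Phi|_c)^n$ there. Theorem~\ref{th:1.1} now applies to $\log|\Phi|_c$: it is maximal on $\Phi^{-1}(\C^{n+1}\setminus\C\R^{n+1})$, so $(dd^c\psi+\omega)^n=0$ on that set, and since $S$ is pluripolar, maximality and the vanishing of the Monge--Amp\`ere measure extend across $S$, exactly as in Lemma~\ref{lem:5.1}. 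One checks, again as in Lemma~\ref{lem:5.1}, that $\Phi^{-1}(\C\R^{n+1})=\{w:(\tilde f_0(w),\dots,\tilde f_n(w))\in\C\R^{n+1}_+\}$ is precisely $K\cap U_i$, so running over $U_0,\dots,U_n$ yields $(dd^c\psi+\omega)^n=0$ on $\Pro^n(\C)\setminus K$.

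It remains to compute the density on $K$. On $M=\Phi^{-1}(\C\R^{n+1})\setminus\A_\Phi$ (the part of $K\cap U_i$ carrying the mass, since the measure has no mass on $\A_\Phi$) I would substitute $\Phi_j=\sqrt{\tilde f_j}$ in the $n$-form \eqref{eq:1.1}. Using $d\sqrt{\tilde f_j}=d\tilde f_j/(2\sqrt{\tilde f_j})$,
\[
\sqrt{\tilde f_j}\;d\sqrt{\tilde f_0}\wedge\cdots\wedge\widehat{d\sqrt{\tilde f_j}}\wedge\cdots\wedge d\sqrt{\tilde f_n}
=\frac{1}{2^n}\,\frac{\tilde f_j}{\sqrt{\tilde f_0\cdots\tilde f_n}}\;\Lambda_{\tilde f_j},
\]
while $\Phi_0^2+\cdots+\Phi_n^2=\tilde f_0+\cdots+\tilde f_n$ is the restriction of $(z_0^2+\cdots+z_n^2)^k$. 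Feeding this into \eqref{eq:1.1}, multiplying by $k^{-n}$, collecting the constant $C_n=(-1)^{\frac{n(n-1)}2}n!\,\Omega_n$ and re-homogenizing turns the chart expression into the $\C^*$-invariant $n$-form on the cone displayed in the statement, which is chart-independent and hence the global answer. Finally, $\psi=0$ on $K$ is verified as in Lemma~\ref{lem:5.1}: on a representative $z$ over a point of $K$ the vector $(\sqrt{f_0(z)},\dots,\sqrt{f_n(z)})$ is a complex scalar times a nonnegative real vector, so $|(\sqrt{f_0(z)},\dots,\sqrt{f_n(z)})|_c=(f_0(z)+\cdots+f_n(z))^{1/2}$; combined with $f_0+\cdots+f_n=(z_0^2+\cdots+z_n^2)^k$ and the fact that such a $z$ lies in $\C\R^{n+1}$, Proposition~\ref{prop:2.1} (formula \eqref{eq:2.6}) gives $\psi(z)=0$.

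The main obstacle is the final computation: one has to keep careful track of the branches of the square roots $\sqrt{f_j}$, of the argument convention for $(z_0^2+\cdots+z_n^2)^{k(n+1)/2}$ in \eqref{eq:1.1}, of the sign $C_n$ and of the powers of $k$, and one must verify that the formula obtained in a single chart is $\C^*$-basic so that it descends to a well-defined $n$-form on $\Pro^n(\C)$ independent of the chart. A secondary point requiring care is the propagation of maximality across the pluripolar set $S$ and the identification of $\Phi^{-1}(\C\R^{n+1})$ with $K\cap U_i$.
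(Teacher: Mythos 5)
Your proposal is correct and follows essentially the same route as the paper, which itself reduces Lemma \ref{lem:5.5} to Theorem \ref{th:1.1} and declares the rest ``almost identical'' to Lemma \ref{lem:5.1}. The only point worth flagging is that the identification of $\{w : \tilde f(w)\in\C\R^{n+1}_+\}$ with $K\cap U_i$ cannot be done ``as in Lemma \ref{lem:5.1}'' (there the sum of the $f_j$ equals $1$, which forces $e^{i\theta}=1$); instead one uses the homogeneity of the $f_j$ to rotate a representative, i.e.\ the identity $\C\cdot\{f\in\R^{n+1}_+\}=\{f\in\C\R^{n+1}_+\}$, which is exactly the one step the paper's proof spells out.
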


\begin{proof}
It is simple to show that
\begin{align*}
  \C\cdot\{z\in \C^{n+1}\,;\, &(f_0(z),\dots,f_n(z))\in \R^{n+1}_+ \}\\
 &=\{z\in \C^{n+1}\,;\, (f_0(z),\dots,f_n(z))\in \C\R^{n+1}_+ \}
\end{align*}
and therefore 
\begin{align*}
  K=\pi\big(\{z\in \C^{n+1}\setminus\{0\}
\,;\, (f_0(z),\dots,f_n(z))\in \C\R^{n+1}_+ \}\big).
\end{align*}
The rest of the proof is almost identical to the one of Lemma 
\ref{lem:5.1}.
\end{proof}
We consider two examples, the first one is studied in \cite{Bos:2017}.

\begin{example}
  \label{ex:5.5} \rm
\textbf{(The real projective space).} Let 
\begin{align*}
K=\Pro^n(\R)=\{[x_0\mathpunct{:}\cdots\mathpunct{:}x_n]
\,;\, x_j\in \R\;\text{ for all }\; j \}\subset \Pro^n(\C)
\end{align*}
be the real projective space. It is easy to see that
\begin{align*}
K=\pi (\{z\in \C^{n+1}\setminus\{0\}\,;\, 
z^2_j\in \R_+\; \text{ for all }\; j \})
\end{align*}
and therefore
\begin{align*}
V_{K,\omega}(z)=\log|(z_0,\dots,z_n)|_c-\log|(z_0,\dots,z_n)|, 
\qquad z=[z_0\mathpunct{:}\cdots\mathpunct{:}z_n],
\end{align*}
is the $\omega$-extremal function for $K$. We clearly have $K\cap
\C^n=\R^n$ and 
\begin{align*}
(dd^cV_{K,\omega}+\omega)^n|_{\R^n}
=\frac{n!\Omega_n}{(1+x_1^2+\cdots+x_n^2)^{\frac{(n+1)}{2}}}
dx_1\wedge\cdots\wedge dx_n.
\end{align*}
\end{example}

\begin{example}
  \label{ex:5.6} \rm
\textbf{(The first and third quadrant of the plane).} Let 
\begin{align*}
K=\pi\big(\{(x_0,x_1,x_2)\in \R^3\setminus\{0\}
\,;\, x_1\cdot x_2\geq 0\} \big)\subset \Pro^2(\C).
\end{align*}
When restricted to $\C^2$ the set $K$ becomes the union of the first 
and third quadrant of the plane $\R^2$, i.e.
\begin{align*}
K\cap \C^2=\{(x_1,x_2)\in \R^2\,;\,   x_1\cdot x_2\geq 0\}.
\end{align*}
It is relatively simple to show that $K$ can be represented as
\begin{align*}
K=\pi\big(\{(z_0,z_1,z_2)\in \C^3\setminus\{0\}\,;\, 
z_0^2\in \R_+,\; (z_1-z_2)^2\in \R_+,\; 2z_1z_2\in \R_+ \} \big)
\end{align*}
and therefore the $\omega$-extremal function for $K$ is
\begin{align*}
V_{K,\omega}(z)=
\log|(z_0,(z_1-z_2),\sqrt{2z_1z_2})|_c
-\log|(z_0,z_1,z_2)|, \quad z=[z_0,z_1,z_2].
\end{align*}
We have
\begin{align*}
(dd^cV_{K,\omega}+\omega)^2|_{K\cap \C^2}=\frac{\sqrt{2}\pi(x_1+x_2)}{(1+x_1^2+x_2^2)^{\frac{3}{2}}\sqrt{x_1x_2}}dx_1\wedge dx_2.
\end{align*}
\end{example}

\subsection*{A compact subset of the complex torus}
\label{subsec:4.3}
Finally we consider one compact subset of the complex torus $X=\C^n/ \Z ^n$. Define the functions
$f_0,\dots.,f_n$ on $X$ by 
\begin{align*}
  f_0(z)=\frac{1}{n}\sum_{j=1}^n \cos^2(\pi z_j),\qquad
  z=(z_1,\dots,z_n)\in X, 
\end{align*}
and
\begin{align*}
	f_j(z)=\frac{1}{n}\sin^2(\pi z_j),\qquad j\in \{1,\dots,n\},\; z\in X,
\end{align*}
so we clearly have $f_0+\cdots+f_n=1$ on $X$. Then the set
\begin{align*}
	K&=\{z\in X \,;\,  f_j(z)\in \R_+\; \text{for all }\; j \}\\
	&=\{z\in X\,;\,  y_j=0 \text{ or } x_j=1/2 \text{ for all $j$
          and} 
\sum_{j=1}^n\cos^2(\pi z_j)\geq 0 \}
\end{align*}
is easily seen to be compact. By Lemma \ref{lem:5.1} the function 
$$\psi=\log J^{-1}(|f_0|+\cdots+|f_n|)
$$
is maximal on $X\setminus K$. For $J\subset \{1,\dots,n \}$ write 
\begin{align*}
	K_J=K\cap \{z\in X\,;\,  y_j=0 \text{ if } j\in J \text{ and } x_j=1/2 \text{ if } j\not\in J \}.
\end{align*}
Then we have $K=\cup_J K_{J}$ and notice that $K_{\{1,\dots,n\}}=\R^n/\Z^n$ and $K_{\emptyset}=\emptyset$. To simplify notation we only calculate $(dd^c\psi)^n$ on $K_J$ when $J$ is of the type $J=\{1,2,\dots, s\}$ for some $1\leq s\leq n$ since every other $K_J$ is practically identical to some $K_J$ of this type. Indeed if $J=\{1,2,\dots, s\}$, then 
\begin{align*}
	f_0|_{K_J}=\frac{1}{n}\left(\sum_{j=0}^s \cos^2(\pi x_j) -\sum_{j=s+1}^n\sinh^2(\pi y_j)\right)
\end{align*}
and
\begin{align*}
	f_j|_{K_J}=\frac{1}{n}\sin^2(\pi x_j)\text{ if $j\leq s$ },\qquad f_j|_{K_J}=\frac{1}{n}\cosh^2(\pi y_j) \text{ if $j>s$}.
\end{align*}
By Lemma \ref{lem:5.1} we have
	\begin{align*}
(dd^c \psi)^n|_{K_J}=\frac{2^nn!\pi^n\Omega_n}{n^{n/2}\sqrt{f_0}}&|\cos(\pi x_1)\cdots \cos(\pi x_s)\sinh(\pi y_{s+1})\cdots \sinh(\pi y_n)|\\
&dx_1\wedge\cdots\wedge dx_s\wedge dy_{s+1}\wedge\cdots\wedge dy_n.
&\end{align*}

\section{Polynomial maps}
\label{sec:5}

\noindent
Let $X$ be an $n$ dimensional complex manifold and $q_0,\dots,q_{n-1}$
be holomorphic functions on $X$. For every $z\in X$ denote by
$P_z:\C\to \C$ the polynomial defined by the equation 
\begin{align*}
P_z(\zeta)&=\zeta^{n+1}-\zeta^n+q_{n-1}(z)\zeta^{n-1}-\cdots+(-1)^nq_1(z)\zeta+(-1)^{n+1}q_0(z)\\
&=\zeta^{n+1}-\zeta^n+\sum_{k=0}^{n-1}(-1)^{n+1-k}q_k(z)\zeta^k.
\end{align*}
For every $z$ denote by $f_0(z),\dots,f_n(z)$ the roots of the polynomial $P_z$ and define the set
\begin{align*}
K:&=\{z\in \C^n\,;\, \text{all the roots of $P_z$ are positive real numbers} \}\\
&=\{z\in \C^n\,;\, f_j(z)\in \R_+,\;\; 0\leq j\leq n \}.
\end{align*}
Notice that we have $f_0+\cdots+f_n=1$ so Lemma \ref{lem:5.1} applies in this situation.

\begin{theorem}\label{th:6.1}
Define $\psi\colon\C^n\to \R$ by $\psi(z)=\log J^{-1}(|f_0|+\cdots+|f_n|)$.
The function $\psi$ equals $0$ on $K$, it is maximal on 
$\C^n\setminus K$ and
\begin{align}
\big(dd^c \psi\big)^n\big|_K
=\pm\frac{n!\Omega_n}
{\sqrt{q_0(z)\Delta(z)}}dq_0\wedge\cdots\wedge dq_{n-1}.
\label{eq:6.1}
\end{align}
where $\Delta(z)$ is the discriminant of the polynomial $P_z$.
\end{theorem}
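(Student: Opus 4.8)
The plan is to deduce Theorem \ref{th:6.1} from Lemma \ref{lem:5.1} by a change-of-variables argument, using the classical relation between the coefficients of a polynomial, its roots, and its discriminant. The map $f=(f_0,\dots,f_n)\colon\C^n\to\C^{n+1}$ sending $z$ to the (unordered) roots of $P_z$ is holomorphic on the open set where $P_z$ has $n+1$ distinct roots; there it satisfies $f_0+\cdots+f_n=1$ because the coefficient of $\zeta^n$ in $P_z$ is $-1$. Hence Lemma \ref{lem:5.1} applies verbatim: $\psi=\log J^{-1}(|f_0|+\cdots+|f_n|)=2\log|(\sqrt{f_0},\dots,\sqrt{f_n})|_c$ vanishes on $K$, is maximal on $\C^n\setminus K$, and on $K$ we have $(dd^c\psi)^n|_K=\pm\dfrac{n!\Omega_n}{\sqrt{f_0\cdots f_n}}\,df_1\wedge\cdots\wedge df_n$.

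The first step is therefore purely algebraic: rewrite $f_0\cdots f_n$ and $df_1\wedge\cdots\wedge df_n$ in terms of $q_0,\dots,q_{n-1}$ and the discriminant $\Delta(z)$. Since $P_z(\zeta)=\prod_{j=0}^n(\zeta-f_j(z))$, Vieta's formulas give $q_0=f_0 f_1\cdots f_n$ directly (the constant term, up to the sign already built into the statement of $P_z$), so $\sqrt{f_0\cdots f_n}=\sqrt{q_0}$. For the top form, I would note that the elementary symmetric functions $e_1(f),\dots,e_{n+1}(f)$ are, up to signs, exactly $1,q_{n-1},\dots,q_0$; since $e_1(f)=f_0+\cdots+f_n=1$ is constant, the remaining $n$ of them, namely $q_{n-1},\dots,q_0$, are functions of $(f_0,\dots,f_n)$ that, together with the constraint $\sum f_j=1$, determine the roots up to permutation. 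Pulling back $dq_0\wedge\cdots\wedge dq_{n-1}$ and comparing with $df_1\wedge\cdots\wedge df_n$ on the hyperplane $\{e_1=1\}$ produces a Jacobian factor equal, up to sign, to the Vandermonde-type product $\prod_{i<j}(f_i-f_j)$, whose square is the discriminant $\Delta$. This is the standard computation that $\partial(e_2,\dots,e_{n+1})/\partial(f_1,\dots,f_n)$ equals $\pm\prod_{i<j}(f_i-f_j)$ when $f_0$ is eliminated via $f_0=1-f_1-\cdots-f_n$. Combining these gives $df_1\wedge\cdots\wedge df_n=\pm\dfrac{1}{\sqrt{\Delta(z)}}\,dq_0\wedge\cdots\wedge dq_{n-1}$, and substituting both identities into the formula from Lemma \ref{lem:5.1} yields \eqref{eq:6.1} on the open dense subset of $K$ where the roots are distinct, i.e.\ where $\Delta(z)\neq0$.

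The second step is to argue that \eqref{eq:6.1} holds as an identity of measures on all of $K$, not just off $\{\Delta=0\}$. Here I would invoke that $\psi$ is globally defined, continuous (indeed the roots $f_j$ are continuous, even though not individually holomorphic across branch points, the symmetric combination $|f_0|+\cdots+|f_n|$ is continuous and $\psi$ is psh on $\C^n$ by Lemma \ref{lem:5.1} applied locally away from the discriminant locus together with removability of the pluripolar set $\{\Delta=0\}$). The Monge-Amp\`ere measure $(dd^c\psi)^n$ puts no mass on the set $\A_\Phi$ from Theorem \ref{th:1.1}, and the branch locus $\{\Delta=0\}$ — of real codimension $2$ — carries no mass either; meanwhile the right-hand side of \eqref{eq:6.1} is an $L^1_{\mathrm{loc}}$ density on $K$ with respect to $dq_0\wedge\cdots\wedge dq_{n-1}$, whose singular set $\{q_0\Delta=0\}$ is likewise negligible. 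Since the two sides agree on the complement of a closed null set and both are Radon measures of finite mass on compacta, they agree.

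I expect the main obstacle to be the careful handling of the branch locus $\{\Delta=0\}$: the individual root functions $f_j$ are not holomorphic there, so Lemma \ref{lem:5.1} cannot be applied globally, and one must check that $\psi$ is genuinely psh across $\{\Delta=0\}$ and that neither Monge-Amp\`ere mass nor the claimed density concentrates on this set. This is where I would lean on the pluripolarity of $\{\Delta=0\}$ together with the Bedford--Taylor continuity of the Monge-Amp\`ere operator under the monotone approximation already used in the proof of Theorem \ref{th:1.1}. The algebraic identification of the Jacobian with $\sqrt{\Delta}$, by contrast, is routine once one eliminates $f_0$.
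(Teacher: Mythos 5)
Your proposal is correct and follows essentially the same route as the paper: apply Lemma \ref{lem:5.1}, note $q_0=f_0\cdots f_n$, establish $df_1\wedge\cdots\wedge df_n=\pm\,\Delta(z)^{-1/2}\,dq_0\wedge\cdots\wedge dq_{n-1}$, and dispose of the branch locus $\{\Delta=0\}$ by pluripolarity. The only cosmetic difference is in the proof of that wedge identity: the paper (Lemma \ref{lem:6.3}) differentiates $P_z(f_j)=0$ and wedges, extracting the Vandermonde via the identities for $\prod_j P_z'(f_j)$, whereas you compute the Jacobian of the elementary symmetric functions directly after eliminating $f_0$ --- both yield the same factor $\pm\prod_{i<j}(f_i-f_j)$.
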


Recall that the discriminant of a polynomial $p$ of degree $n$ with
roots $s_1,\dots,s_n$ and leading coefficient $a_n$ is defined as 
\begin{align*}
\Delta:=a_n^{2n-2}\prod_{j<k}(s_j-s_k)^2
=(-1)^{\frac{n(n-1)}{2}}a_n^{2n-2}\prod_{j\not=k}(s_j-s_k).
\end{align*}
The discriminant can also be represented in terms of the derivative of
$p$, indeed we have 
\begin{align}
\prod_{j=1}^{n}p'(s_j)
=\prod_{j=1}^{n}\Bigg(a_n\prod_{\substack{k=1\\k\not=j}}^{n}(s_k-s_j)\Bigg)
=\frac{(-1)^{\frac{n(n-1)}{2}}\Delta}{a_n^{n-2}}.
\label{eq:6.2}
\end{align}
It follows easily from definition of the discriminant that for every
monic polynomial $p$ of degree $n$ with discriminant $\Delta$ and
roots $s_1,\dots,s_n$ we have 
\begin{equation}
\label{eq:6.3}
\tilde{\Delta}_j\cdot (p'(s_j))^2=\Delta,
\qquad j=1,\dots,n 
\end{equation}
where $\tilde{\Delta}_j$ is the discriminant of
$\tfrac{p(z)}{z-s_j}$. Before we prove Theorem \ref{th:6.1} we
consider a lemma. 

\begin{lemma}\label{lem:6.3}
Let $P_z$, $q_0,\dots,q_{n-1}$, $f_0,\dots,f_n$ and $\Delta$ be as in 
Theorem \ref{th:6.1}. Then we have
\begin{equation}
\label{eq:6.4}
df_1\wedge\cdots\wedge df_n
=\pm\frac{dq_0\wedge\cdots\wedge dq_{n-1}}{\sqrt{\Delta(z)}}.
\end{equation}
\end{lemma}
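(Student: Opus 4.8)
**Proof proposal for Lemma 6.5 (the relation $df_1\wedge\cdots\wedge df_n = \pm dq_0\wedge\cdots\wedge dq_{n-1}/\sqrt{\Delta}$).**

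The plan is to compute the Jacobian of the map $(f_1,\dots,f_n)\mapsto(q_0,\dots,q_{n-1})$ and identify it with $\pm\sqrt{\Delta}$. First I would observe that, since $f_0,\dots,f_n$ are the roots of $P_z$ and $f_0+\cdots+f_n=1$ (the coefficient of $\zeta^n$ is fixed at $-1$), we may regard $f_0=1-f_1-\cdots-f_n$ as determined by $f_1,\dots,f_n$, and the remaining coefficients $q_0,\dots,q_{n-1}$ are, up to sign, the elementary symmetric functions of $f_0,\dots,f_n$. Concretely, from $P_z(\zeta)=\prod_{j=0}^n(\zeta-f_j)$ we read off $(-1)^{n+1-k}q_k = (-1)^{n+1-k}e_{n+1-k}(f_0,\dots,f_n)$ for $k=0,\dots,n-1$, so $q_k=e_{n+1-k}(f_0,\dots,f_n)$. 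Thus it suffices to compute the determinant of the $n\times n$ matrix $\big(\partial q_k/\partial f_i\big)_{0\le k\le n-1,\ 1\le i\le n}$, treating $f_0$ as $1-\sum_{i\ge1}f_i$.

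Next I would use the standard identity for derivatives of elementary symmetric polynomials: if we set $p(\zeta)=\prod_{j=0}^n(\zeta-f_j)=\sum_{m=0}^{n+1}(-1)^m e_m\,\zeta^{n+1-m}$, then differentiating with respect to a root $f_j$ gives $\partial p/\partial f_j=-\,p(\zeta)/(\zeta-f_j)$, which expands as a polynomial in $\zeta$ whose coefficients encode $\partial e_m/\partial f_j=e_{m-1}(f_0,\dots,\widehat{f_j},\dots,f_n)$. Accounting for the chain-rule contribution through $f_0$, the matrix of partials of the $q_k$ in the $f_i$ is, after elementary row and column operations, a (generalized) Vandermonde-type matrix built from the $f_j$. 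The cleanest route is to evaluate $dq_0\wedge\cdots\wedge dq_{n-1}$ directly: writing $dq_k=\sum_{j=0}^n \frac{\partial q_k}{\partial f_j}\,df_j$ with the convention $df_0=-\sum_{i\ge 1}df_i$, one gets a wedge of $n$ one-forms in the $n$ variables $df_1,\dots,df_n$, and the resulting coefficient is a polynomial in $f_0,\dots,f_n$ that is antisymmetric in the $f_j$, hence divisible by $\prod_{j<k}(f_j-f_k)$; a degree count forces it to equal $\pm\prod_{j<k}(f_j-f_k)$, which is exactly $\pm\sqrt{\Delta}$ since $P_z$ is monic (so $a_n=1$ and $\Delta=\prod_{j<k}(f_j-f_k)^2$).

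The main obstacle is the bookkeeping in this Jacobian computation — correctly handling the constraint $f_0+\cdots+f_n=1$ so that one is genuinely computing an $n\times n$ determinant, and then recognizing the antisymmetric polynomial that appears. An alternative, perhaps slicker, argument avoids explicit matrices: restrict to the open set where the $f_j$ are distinct, use the $f_j$ themselves as local coordinates on a chart of the variety $\{f_0+\cdots+f_n=1\}$, and note that the coefficient map $(f_1,\dots,f_n)\mapsto(q_0,\dots,q_{n-1})$ is, up to sign and the fixed leading data, the classical "roots-to-coefficients" map whose Jacobian is the Vandermonde determinant $\prod_{j<k}(f_k-f_j)$; this is a textbook fact (e.g. via the resultant/discriminant formula $\Delta=\prod_{j<k}(f_j-f_k)^2$). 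Either way, taking square roots and pulling back along $z\mapsto(q_0(z),\dots,q_{n-1}(z))$ gives \eqref{eq:6.4} with the sign ambiguity exactly as stated. I would present the Vandermonde-Jacobian argument as the main line and relegate the constraint-handling to a short remark.
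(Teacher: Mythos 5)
Your proposal is correct in substance, but it runs the computation in the opposite direction from the paper. You compute the Jacobian of the coefficients as functions of the roots, i.e.\ you expand $dq_k=\sum_j(\partial q_k/\partial f_j)\,df_j$ using $q_k=e_{n+1-k}(f_0,\dots,f_n)$, handle the constraint $f_0=1-f_1-\cdots-f_n$, and identify the resulting determinant with the Vandermonde $\prod_{j<k}(f_j-f_k)=\pm\sqrt{\Delta}$ by antisymmetry and a degree count. The paper instead differentiates the identity $P_z(f_j(z))=0$ implicitly, obtaining $P'_z(f_j)\,df_j=\sum_{k}(-1)^{n-k}f_j^k\,dq_k$, wedges these relations over $j=1,\dots,n$ only, and reads off a Vandermonde matrix in $f_1,\dots,f_n$ whose squared determinant is the discriminant of $P_z(\zeta)/(\zeta-f_0)$; multiplying through by $P'_z(f_0)$ and using the identities $\prod_j P'_z(f_j)=\pm\Delta/a_n^{n-2}$ and $\tilde\Delta_j\,P'(s_j)^2=\Delta$ then assembles $\sqrt{\Delta}$. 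The paper's route buys you two things: it never needs to express $df_0$, so the constraint bookkeeping you flag as the main obstacle simply does not arise, and the discriminant appears directly through $\prod_j P'_z(f_j)$ rather than through recognizing an antisymmetric polynomial. Your route is more classical in flavor (roots-to-coefficients Jacobian equals the Vandermonde), but note that the antisymmetry-plus-degree-count argument by itself only shows the coefficient is a \emph{constant} multiple of $\prod_{j<k}(f_j-f_k)$; to conclude the constant is $\pm1$ you still need the standard normalization of the Jacobian of the elementary symmetric functions (e.g.\ by comparing one monomial, or by the column-operation reduction of the full $(n+1)\times(n+1)$ Jacobian using the row $\partial e_1/\partial f_j\equiv 1$). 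With that point made explicit, your argument is complete and yields the same formula.
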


\begin{proof}  For every $j=0,\dots,n$ we have 
\begin{equation}\label{eq:6.5}
0=P_z(f_j(z))=f^{n+1}_j(z)-f^n_j(z)+\sum_{k=0}^{n-1}(-1)^{n+1-k}q_k(z)f^k_j(z).
\end{equation}
Differentiating equation (\ref{eq:6.5}) gives
	\begin{align}\label{wedgee}
	P'_z(f_j(z))df_j=\sum_{k=0}^{n-1}(-1)^{n-k}f^k_j(z)dq_k.
	\end{align}
	Wedging equation (\ref{wedgee}) over $1\leq j\leq n$ (notice
        we do not include $j=0$ in the wedge product) we get 
	\begin{align}
	\left(\prod _{j=1}^n P'_z(f_j(z))
          \right)df_1\wedge\cdots\wedge
          df_n=(-1)^{\frac{n(n-1)}{2}}\det(A)dq_0\wedge\cdots\wedge
          dq_{n-1}\label{poo} 
	\end{align}
	where $A$ is the matrix with coefficients
        $A_{j,k}=f_j^{k-1}(z)$.  But $A$ is a Vandermonde matrix and
        therefore we see that $(\det(A))^2$ equals the discriminant of
        the polynomial $\frac{P_z(\zeta)}{\zeta-f_0(z)}$. Now the
        result follows by multiplying $P'_z(f_0(z))$ on both sides of
        equation (\ref{poo}) and then applying equations
        (\ref{eq:6.2}) and (\ref{eq:6.3}). 
\end{proof}

\begin{proof}[Proof of Theorem \ref{th:6.1}]
The functions $f_0,\dots,f_n$ are holomorphic in a neighborhood of any
point $z\in \C^n$ such that $\Delta(z)\not=0$. Therefore, by Lemma
\ref{lem:5.1} we see that $\psi$ is maximal on 
\begin{align*}
  \C^n\setminus (K\cup \{\Delta(z)=0 \}).
\end{align*}
It is well known that the discriminant is a holomorphic function so
the set $\{\Delta(z)=0 \}$ is pluripolar and we conclude that $\psi$
is maximal on $\C^n\setminus K$. Equation (\ref{eq:6.1}) follows from
equation (\ref{jafna2}) by applying Lemma \ref{lem:6.3} and noticing that
$q_0=f_0\cdots f_n$. 
\end{proof}

Before we calculate an explicit example we prove the following lemma.

\begin{lemma}\label{lem:6.3a} The roots of the polynomial $p(z)=z^3-z^2+az-b$ are non-negative real numbers if and only if $a,b,\Delta$ are non-negative real numbers. 
\end{lemma}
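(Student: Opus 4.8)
The plan is to analyze the cubic $p(z)=z^3-z^2+az-b$ via Vieta's formulas together with sign conditions on the elementary symmetric functions, and to connect these with the discriminant. Write the roots as $s_1,s_2,s_3$, so that $s_1+s_2+s_3=1$, $s_1s_2+s_1s_3+s_2s_3=a$, and $s_1s_2s_3=b$. One direction is easy: if all three roots are non-negative reals, then $a$ and $b$ are non-negative as sums and products of non-negative numbers, and $\Delta=\prod_{j<k}(s_j-s_k)^2\geq 0$ automatically since it is a product of squares of real numbers. So the content is the converse.

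For the converse, suppose $a,b,\Delta\geq 0$. First I would use that $\Delta\geq 0$ for a real cubic to conclude that $p$ has three real roots (counted with multiplicity); this is the standard sign-of-discriminant criterion for cubics, namely $\Delta>0$ gives three distinct real roots and $\Delta=0$ gives a repeated real root, while $\Delta<0$ gives one real and two complex conjugate roots. Hence $s_1,s_2,s_3\in\R$. It then remains to show they are all $\geq 0$. Here I would argue by contradiction using the symmetric functions: since $s_1+s_2+s_3=1>0$, not all roots are $\leq 0$; so suppose exactly one root, say $s_3$, is negative (the case of two negative roots would force, together with $s_1s_2s_3=b\geq 0$, that $s_3\geq 0$ unless one of them is zero, which I would handle as a degenerate sub-case). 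With $s_3<0$ and $s_1,s_2$ of the same sign forced by $b=s_1s_2s_3\geq 0$ (so $s_1s_2\leq 0$, i.e. $s_1,s_2$ have opposite signs or one vanishes) — I would organize the sign bookkeeping carefully and derive a contradiction with $a=s_1s_2+s_3(s_1+s_2)\geq 0$: writing $s_1+s_2=1-s_3>1$ and $s_1s_2=b/s_3\leq 0$, we get $a=s_1s_2+s_3(1-s_3)=b/s_3+s_3-s_3^2$, and for $s_3<0$ every term here is $\le 0$ (indeed $b/s_3\le 0$, $s_3<0$, $-s_3^2<0$), forcing $a<0$ unless $b=0$; the residual case $b=0$ means $0$ is a root, and then $p(z)/z=z^2-z+a$ has roots summing to $1$ and with product $a\ge 0$ and discriminant $1-4a$, which must be $\ge 0$ as well — but I should double-check that $\Delta\ge0$ for the cubic indeed forces $1-4a\ge 0$ in this degenerate case, or argue directly that $z^2-z+a$ with $a\in[0,1/4]$ has non-negative roots.

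The step I expect to be the main obstacle is the sign-chasing in the converse: the discriminant condition only delivers reality of the roots, so the positivity must be squeezed out of the two Vieta relations $a\ge0$, $b\ge0$ combined with $s_1+s_2+s_3=1$, and one has to be careful about the boundary cases where a root is zero or a root is repeated. An alternative, possibly cleaner, route would be to use Descartes' rule of signs: the coefficient sequence of $p(z)=z^3-z^2+az-b$ is $(+,-,+,-)$ when $a,b>0$, which has three sign changes, so $p$ has either three or one positive real roots; combined with $\Delta\ge0$ (three real roots) and $p(0)=-b\le0$ together with $p(z)\to+\infty$, one rules out the ``one positive root'' possibility because a single positive root plus two non-positive real roots would give $s_1+s_2+s_3\le s_1$, and then one still needs $s_1>1$, whence $s_2s_3=b/s_1$ and $s_2+s_3=1-s_1<0$ with $s_2s_3\ge0$ forces both $\le0$, and $a=s_1(s_2+s_3)+s_2s_3<0$, a contradiction. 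I would likely present the Descartes-plus-discriminant version in the main text and relegate the degenerate zero-coefficient cases to a short remark.
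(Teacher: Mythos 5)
Your proposal is correct, and its overall skeleton matches the paper's: the forward direction via Vieta/coefficient comparison plus $\Delta=\prod_{j<k}(s_j-s_k)^2\geq 0$, and in the converse the use of $\Delta\geq 0$ to force all three roots to be real. Where you differ is in the last step, excluding negative roots. The paper does this in one line: for $x<0$ and $a,b\geq 0$ every term of $p(x)=x^3-x^2+ax-b$ is $\leq 0$ and $x^3-x^2<0$ strictly, so $p(x)<0$ and no root can be negative. Your route goes through Vieta sign-chasing, and it does work, but it is more laborious than it needs to be: if $s_3<0$ is a root, then $s_1s_2=b/s_3\leq 0$ and $a=s_1s_2+s_3(s_1+s_2)=b/s_3+s_3(1-s_3)\leq s_3(1-s_3)<0$ already gives the contradiction, with no case distinction on the signs of $s_1,s_2$ and no separate treatment of $b=0$ (your worry that the argument only forces $a<0$ ``unless $b=0$'' is unfounded, since the term $s_3-s_3^2$ is strictly negative on its own); likewise the digression about $1-4a\geq 0$ is not needed. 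The Descartes-rule variant you sketch is also fine, though applying it to $p(-z)$, whose coefficients are all of one sign, would rule out negative roots directly and is essentially the paper's argument in disguise. In short: same structure, with the paper trading your symmetric-function bookkeeping for a direct evaluation of $p$ on the negative axis.
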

\begin{proof}
	Let $s_0,s_1,s_2$ be the roots of $p$. First suppose all the roots are non-negative real numbers. Then we have
	\begin{align*}
	z^3-z^2+az-b=(z-s_0)(z-s_1)(z-s_2)
	\end{align*}
	and by comparing coefficients we see that $a,b\geq 0$. Also by definition of the discriminant we have $\Delta\geq 0$. Conversely suppose $a,b,\Delta\geq0$. Since the discriminant is non-negative all the roots $s_0,s_1,s_2$ are real numbers. We have $p(x)<0$ for all $x<0$ and therefore the roots can't be negative.
\end{proof}

\noindent
\textbf{Remark:} A similar statement for higher order polynomials is
not true. As a counter-example we can consider the polynomial 
\begin{align*}
p(z)=z^4-z^3+\tfrac{9}{4}z^2-z+\tfrac{5}{4}
\end{align*}
which has discriminant $\Delta=\frac{289}{16}$. The polynomial $p$
would satisfy the higher order analogue of Lemma \ref{lem:6.3a} but it has
$4$ complex roots, namely 
\begin{align*}
s_0=i,\;s_1=-i,\; s_2=\tfrac{1}{2}-i,\; s_3=\tfrac{1}{2}+i.
\end{align*}
\qed

\begin{example} \rm
	We apply Theorem \ref{th:6.1} with $X=\C^2$, $q_0(z)=z_2$ and $q_1(z)=z_1$ so $P_z(\zeta)=\zeta^3-\zeta^2+z_1\zeta-z_2.$
	Then, using the equation for the discriminant 
for third order polynomials, we have
	\begin{align*}
	\Delta(z)=z_1^2-4z_1^3-4z_2-27z_2^2+18z_1z_2.
	\end{align*}
	Using a program we find that
	\begin{align*}
	K&=\{(z_1,z_2)\in \C^2\,;\,  \text{all the roots of }P_z \text{ are real and positive}  \}\\
	&=\{(x_1,x_2)\in \R^2_+\,;\, \Delta(x_1,x_2)\geq 0 \}
	\end{align*}
	looks like this:
\begin{center}
			\includegraphics[width=6.5cm, height=5cm]{./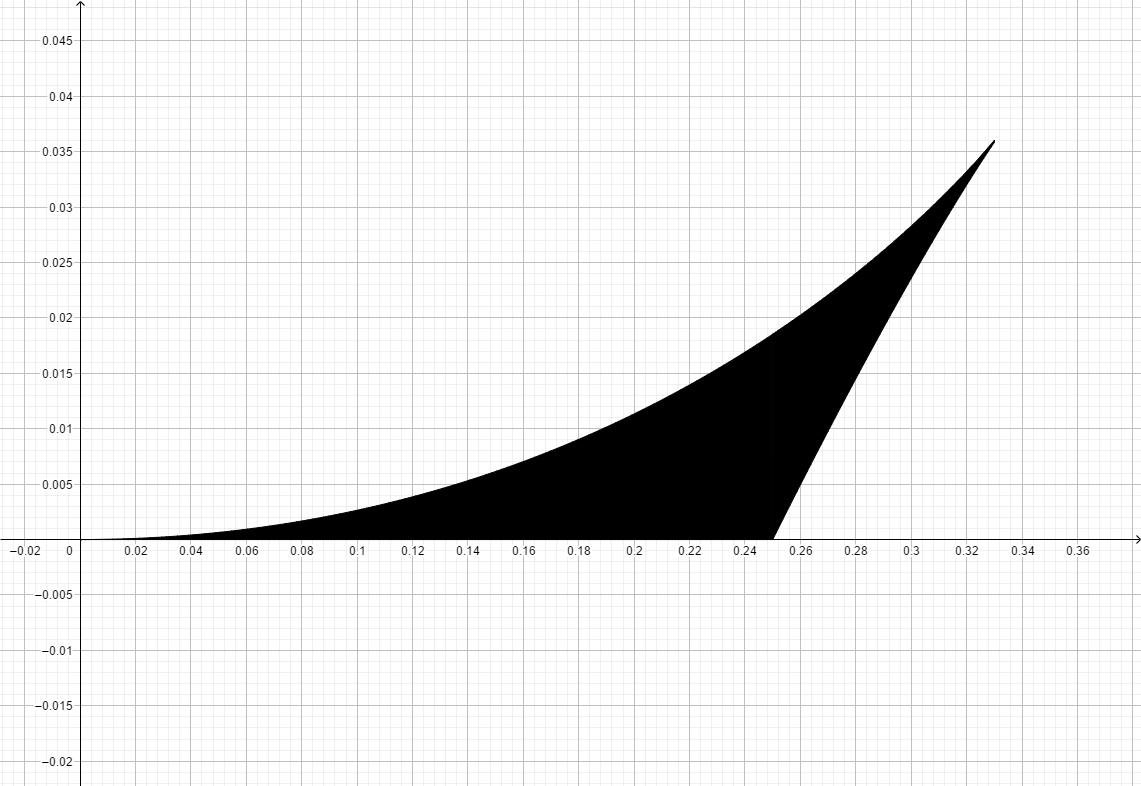}	
\end{center}
	The Monge-Ampere measure of $\psi$ defined as in Theorem \ref{th:6.1} is
	\begin{align*}
	(dd^c\psi)^2|_K=\frac{ 2\pi dx_1\wedge dx_2}{\sqrt{x_2(x_1^2-4x_1^3-4x_2-27x_2^2+18x_1x_2)}}.
	\end{align*}
	Notice that $\psi$ is not the actual extremal function for $K$ because its growth at infinity differs from the logarithm. This means that we have not calculated the actual equilibrium measure for $K$. 
\end{example}

\section{Regularization of the Lie norm}
\label{sec:6}

\noindent
In this section  we study the Levi form of the function 
$h_\varepsilon=\log v_\varepsilon$ where
$(v_\varepsilon)_{\varepsilon\geq 0}$ is the regularization of
$v=|\cdot|_c$ given for $\zeta\in \C^N$ by
\begin{equation}
  v_\varepsilon(\zeta)
=\big(|a|^2+\varepsilon|b|^2\big)^{\frac 12}
+\big(|b|^2+\varepsilon|a|^2\big)^{\frac 12}
  \label{eq:7.1}
\end{equation}
with the same notation as in the previous sections.
We see that $v_\varepsilon$ is complex homogeneous of degree $1$,
$v_\varepsilon(\zeta)\searrow v(\zeta)=|a|+|b|=|\zeta|_c$
as $\varepsilon\searrow 0$. The Levi form of $h_\varepsilon$ is quite 
involved, so in order to  simplify our calculations  
we define  the functions  $A$ and $B$ on $\C^N$ by
\begin{equation*}
 A(\zeta)=|a|^2=\tfrac{1}{2}\big(|\zeta|^2+|\scalar \zeta\zeta|\big)
\quad \text{ and } \quad
B(\zeta)=|b|^2=\tfrac{1}{2}\big(|\zeta|^2-|\scalar \zeta\zeta|\big)
\end{equation*}
and for every $\varepsilon\geq 0$,
$A_\varepsilon=A+\varepsilon B$   and
$B_\varepsilon=B+\varepsilon A$.
We observe that  
\begin{gather*}
A_{\varepsilon}+B_{\varepsilon}
=(1+\varepsilon)(A+B)=(1+\varepsilon)|\zeta|^2, \\
A_{\varepsilon}-B_{\varepsilon}=(1-\varepsilon)(A-B)
=(1-\varepsilon)|\langle \zeta,\zeta\rangle|, \quad \text{ and } \\
4A_\varepsilon B_\varepsilon=(A_\varepsilon+B_\varepsilon)^2-
(A_\varepsilon-B_\varepsilon)^2
=(1+\varepsilon)^2|\zeta|^4-(1-\varepsilon)^2|\scalar \zeta\zeta|^2.  
\end{gather*}
We define the function  $\varphi_\varepsilon$ on $\C^N$ by
\begin{align}
  \label{eq:7.4a}
  \varphi_\varepsilon(\zeta)=2
\big(A_\varepsilon(\zeta)B_\varepsilon(\zeta)\big)^{\frac 12}
&=\big((1+\varepsilon)^2|\zeta|^4
-(1-\varepsilon)^2|\scalar \zeta\zeta|^2\big)^{\frac 12}\\
&=2\big((1+\varepsilon^2)|a|^2|b|^2+\varepsilon(|a|^4+|b|^4) \big)^{\frac 12}.
\label{eq:7.4b}
\end{align}
The right hand side of (\ref{eq:7.4a})
shows that $\varphi_\varepsilon$
is a $C^\infty$ function on $\C^N\setminus\{0\}$ and
the formula for $v_\varepsilon$ becomes
\begin{equation}
v_\varepsilon(\zeta)=A_\varepsilon^{\frac 12}
+B_\varepsilon^{\frac 12}=(A_\varepsilon+B_\varepsilon
+2(A_\varepsilon B_\varepsilon)^{\frac 12}\big)^{\frac 12}
 =\big((1+\varepsilon)|\zeta|^2
+\varphi_\varepsilon(\zeta)\big)^{\frac 12}.
  \label{eq:7.5}
\end{equation}

\begin{theorem}\label{th:6a.1} 
The function $h_\varepsilon=\log v_\varepsilon$ is psh  
and maximal on $\C^N$.  
If $\zeta\in \C^N\setminus \C\R^N$ then the Levi-form 
of $h_\varepsilon$ at $\zeta$ is
\begin{align*}
\mathcal{L}_{h_\varepsilon}(\zeta,w)
=\frac{(1+\varepsilon)}{2\varphi_\varepsilon(\zeta)}
\left(\frac{4\varepsilon|\zeta|^4}{\varphi_{\varepsilon}(\zeta)^2}
|\langle w,\xi\rangle|^2+|w'|^2\right)
\end{align*}
where $w'$ denotes the component of $w$ orthogonal to $\zeta$ and $\bar{\zeta}$, and
\begin{equation}
\label{eq:3.15}
\xi=(|b|e_a-i|a|e_b)/|\zeta|
\end{equation}
is a unit vector in the plane
$\operatorname{span}\{\zeta,\bar{\zeta}\}=\operatorname{span}\{a,b\}$ 
which is perpendicular to $\zeta$, and $e_a$ and $e_b$ are
unit vectors in the direction of $a$ and $b$. In other words,
        the Levi-matrix of $h_{\varepsilon}$ has three distinct
        eigenvalues 
\begin{align*}
\lambda_0=0,\qquad 
\lambda_1=\frac{2\varepsilon(1+\varepsilon)|\zeta|^4}
{\varphi_\varepsilon(\zeta)^3},\qquad 
\lambda_2=\frac{1+\varepsilon}{2\varphi_{\varepsilon}(\zeta)},
	\end{align*}
	and the corresponding eigenspaces are
	\begin{align*}
		V_0=\operatorname{span}\{\zeta \},\qquad V_1=\operatorname{span}\{\xi \},\qquad V_2=\{\zeta,\bar{\zeta} \}^{\perp}=\{a,b \}^{\perp}.
	\end{align*}
\end{theorem}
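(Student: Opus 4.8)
The plan is to compute the Levi matrix of $h_\varepsilon=\log v_\varepsilon$ directly and then diagonalize it. Since $v_\varepsilon=\big((1+\varepsilon)|\zeta|^2+\varphi_\varepsilon\big)^{1/2}$ and $\varphi_\varepsilon=\big((1+\varepsilon)^2|\zeta|^4-(1-\varepsilon)^2|\scalar\zeta\zeta|^2\big)^{1/2}$, everything is expressed in terms of the two basic functions $|\zeta|^2=\scalar\zeta{\bar\zeta}$ and $\scalar\zeta\zeta$, whose holomorphic and antiholomorphic derivatives are elementary: $\partial_j(|\zeta|^2)=\bar\zeta_j$, $\partial_{\bar k}(|\zeta|^2)=\zeta_k$, $\partial_j(\scalar\zeta\zeta)=2\zeta_j$, $\partial_{\bar k}(\scalar\zeta\zeta)=0$. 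The strategy is first to write $h_\varepsilon=\tfrac12\log\big((1+\varepsilon)|\zeta|^2+\varphi_\varepsilon\big)$ and apply the chain rule $\partial\bar\partial\log F=\partial\bar\partial F/F-\partial F\wedge\bar\partial F/F^2$, reducing the problem to computing $\partial\bar\partial v_\varepsilon^2$ and $\partial v_\varepsilon^2$. For the second-order term one needs $\partial\bar\partial\varphi_\varepsilon$, which in turn follows from $\varphi_\varepsilon^2=(1+\varepsilon)^2|\zeta|^4-(1-\varepsilon)^2|\scalar\zeta\zeta|^2$ by differentiating $2\varphi_\varepsilon\,\partial\bar\partial\varphi_\varepsilon+2\partial\varphi_\varepsilon\wedge\bar\partial\varphi_\varepsilon=\partial\bar\partial(\varphi_\varepsilon^2)$, all of which reduces to combinations of the rank-one forms built from $\bar\zeta_j$, $\zeta_j$, and $\zeta_j$ (holomorphic differential of $\scalar\zeta\zeta$).

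The key geometric observation is that all the resulting rank-one Hermitian forms live in the span of the three vectors $\bar\zeta$, $\zeta$ (coming from $d|\zeta|^2$) and again $\bar\zeta$ (coming from $d\scalar\zeta\zeta$, whose conjugate is $2\zeta$). Since $|\zeta|^2$ contributes the Hermitian form $w\mapsto|w|^2$ (the identity) while the remaining terms only involve $|\scalar w\zeta|^2$-type and $|\scalar w{\bar\zeta}|^2$-type expressions, the Levi matrix is the identity (times a scalar) plus corrections supported on $\operatorname{span}\{\zeta,\bar\zeta\}$. This immediately gives $V_2=\{\zeta,\bar\zeta\}^\perp$ as an eigenspace with eigenvalue $\lambda_2=(1+\varepsilon)/(2\varphi_\varepsilon)$: one just checks that for $w\perp\zeta,\bar\zeta$ the $\scalar w\zeta$ and $\scalar w{\bar\zeta}$ terms vanish, leaving only the identity contribution. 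For the two-dimensional space $\operatorname{span}\{\zeta,\bar\zeta\}=\operatorname{span}\{a,b\}$ one works in the orthogonal basis $\{\zeta,\xi\}$ with $\xi=(|b|e_a-i|a|e_b)/|\zeta|$ from \eqref{eq:3.15}; since $\scalar\zeta\xi=0$ and $\scalar{\bar\zeta}\xi$ can be computed explicitly (using $\scalar ab=0$), the $2\times 2$ block becomes diagonal, with the $\zeta$-direction giving eigenvalue $0$ by complex homogeneity of degree $0$ of $h_\varepsilon$ (Euler's identity forces $\mathcal L_{h_\varepsilon}(\zeta,\zeta)=0$), and the $\xi$-direction giving $\lambda_1$.

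Concretely I would carry out the steps in this order: (1) record $\partial|\zeta|^2$, $\partial\scalar\zeta\zeta$, and their $\bar\partial$-counterparts; (2) compute $\partial v_\varepsilon^2$ and $\partial\bar\partial v_\varepsilon^2$ in terms of $\varphi_\varepsilon$, $\partial\varphi_\varepsilon$, $\partial\bar\partial\varphi_\varepsilon$; (3) expand $\partial\varphi_\varepsilon$ and $\partial\bar\partial\varphi_\varepsilon$ from $\varphi_\varepsilon^2$; (4) assemble $\mathcal L_{h_\varepsilon}(\zeta,w)=\big(\partial\bar\partial v_\varepsilon^2\big)(w,\bar w)/(2v_\varepsilon^2)-\big|\partial v_\varepsilon^2(w)\big|^2/(2v_\varepsilon^4)$ and simplify using the identities $A_\varepsilon+B_\varepsilon=(1+\varepsilon)|\zeta|^2$, $4A_\varepsilon B_\varepsilon=\varphi_\varepsilon^2$ recorded before the theorem; (5) decompose $w=c\,\zeta/|\zeta|+d\,\xi+w'$ with $w'\perp\zeta,\bar\zeta$, substitute, and read off that the cross terms vanish and the coefficients of $|c|^2$, $|d|^2=|\scalar w\xi|^2$, $|w'|^2$ are $0$, $\lambda_1\varphi_\varepsilon^{?}$-normalization, and $\lambda_2$ respectively, matching the stated formula $\mathcal L_{h_\varepsilon}(\zeta,w)=\tfrac{1+\varepsilon}{2\varphi_\varepsilon}\big(\tfrac{4\varepsilon|\zeta|^4}{\varphi_\varepsilon^2}|\scalar w\xi|^2+|w'|^2\big)$; (6) conclude positivity (hence $h_\varepsilon$ psh) and that $\lambda_0=0$ always occurs (hence $h_\varepsilon$ maximal on all of $\C^N$, not just off $\C\R^N$, since in the limiting case $\zeta\in\C\R^N$ one has $|b|=0$ and $\varphi_\varepsilon$ still behaves well — though for the eigenspace description one needs $\zeta\notin\C\R^N$ so that $\zeta,\bar\zeta$ are independent). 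The main obstacle is purely computational: keeping track of the rank-one forms in step (3)–(4) without sign or factor errors, particularly the interference between the $\partial\bar\partial$ term and the $\partial\otimes\bar\partial$ term, which must cancel precisely so that the $\bar\zeta$-direction collapses into the $\xi$ eigenvector rather than producing a spurious fourth eigenvalue. A useful check is to verify the trace: $\lambda_1+(N-2)\lambda_2$ should equal $\Delta h_\varepsilon$ computed independently, and to verify the $\varepsilon=0$ case reproduces the two-dimensional kernel $\operatorname{span}\{\zeta,\bar\zeta\}$ quoted from \cite{Bos:2017}.
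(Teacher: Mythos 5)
Your plan is correct, and its skeleton coincides with the paper's proof: both reduce $\mathcal{L}_{h_\varepsilon}$ via the chain rule for $\log$, get the zero eigenvalue from logarithmic homogeneity, identify $\{\zeta,\bar\zeta\}^{\perp}$ as an eigenspace for $\lambda_2$ because every non-scalar contribution to the Levi matrix is a rank-one form $|\langle u,w\rangle|^2$ with $u\in\operatorname{span}\{a,b\}=\operatorname{span}\{\zeta,\bar\zeta\}$, and then compute the remaining eigenvalue on the $\xi$-direction by brute force. Where you genuinely diverge is the choice of decomposition: the paper writes $v_\varepsilon=A_\varepsilon^{1/2}+B_\varepsilon^{1/2}$ and propagates $\nabla$ and $\mathcal{L}$ through the two square roots separately, whereas you work with $v_\varepsilon^2=(1+\varepsilon)|\zeta|^2+\varphi_\varepsilon$ and extract $\partial\bar\partial\varphi_\varepsilon$ from the polynomial identity $\varphi_\varepsilon^2=(1+\varepsilon)^2|\zeta|^4-(1-\varepsilon)^2|\langle\zeta,\zeta\rangle|^2$. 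Your route buys a small but real advantage: $A$ and $B$ involve $|\langle\zeta,\zeta\rangle|$ and are not differentiable on the cone $\langle\zeta,\zeta\rangle=0$ (i.e.\ $|a|=|b|$), which lies inside $\C^N\setminus\C\R^N$, so the paper must assume $\langle\zeta,\zeta\rangle\neq 0$ and handle that locus by continuity, while $\varphi_\varepsilon^2$ is real-analytic everywhere and $\varphi_\varepsilon>0$ off the origin for $\varepsilon>0$. The cost is that your single function $\varphi_\varepsilon$ carries all the algebra that the paper spreads over $A_\varepsilon$ and $B_\varepsilon$, so the simplification in your step (4)--(5) is heavier; the identities $A_\varepsilon+B_\varepsilon=(1+\varepsilon)|\zeta|^2$ and $4A_\varepsilon B_\varepsilon=\varphi_\varepsilon^2$ that you cite are indeed what make it close. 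Two points to tighten: $h_\varepsilon$ is logarithmically homogeneous rather than homogeneous of degree $0$, and to place $\zeta$ in the kernel of the Levi \emph{matrix} (not merely to get $\mathcal{L}_{h_\varepsilon}(\zeta,\zeta)=0$) you should differentiate the degree-$(-1)$ homogeneity of $\partial h_\varepsilon/\partial\bar\zeta_k$ or invoke positive semidefiniteness once established; and to conclude that $\xi$ is an eigenvector you need to say that the rank-one corrections preserve $\operatorname{span}\{\zeta,\bar\zeta\}$, so Hermitian symmetry together with $L\zeta=0$ forces $L\xi\in\C\xi$ --- the same (largely implicit) argument the paper uses.
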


\begin{proof}
The Levi form of $h_\varepsilon$ at $\zeta\in \C^N\setminus\{0\}$ 
is given by
\begin{equation}
  \label{eq:7.2}
  \L_{h_\varepsilon}(\zeta;w)
=\dfrac
1{v_\varepsilon(\zeta)}\bigg(
\L_{v_\varepsilon}(\zeta;w)-
\dfrac
{|\scalar{\nabla v_\varepsilon(\zeta)}{w}|^2}{v_\varepsilon(\zeta)}\bigg),
\end{equation}
where $\nabla v_\varepsilon=(\partial v_\varepsilon/\zeta_1,\dots,
\partial v_\varepsilon/\zeta_N)$.  
Since $v_\varepsilon=A_\varepsilon^{\frac 12}+B_\varepsilon^{\frac
  12}$, we have
\begin{align}
\label{eq:7.6}
\scalar{\nabla v_\varepsilon(\zeta)}w
&=\tfrac 12 A_\varepsilon^{-\frac 12}
\scalar{\nabla A_\varepsilon(\zeta)}w
+\tfrac 12 B_\varepsilon^{-\frac 12}
\scalar{\nabla B_\varepsilon(\zeta)}w,
\\
\L_{v_\varepsilon}(\zeta;w)
&=\tfrac 12 A_\varepsilon^{-\frac 12}
\L_{A_\varepsilon}(\zeta;w)
+\tfrac 12 B_\varepsilon^{-\frac 12}
\L_{B_\varepsilon}(\zeta;w)\nonumber \\
&
-\tfrac 14 A_\varepsilon^{-\frac 32}
|\scalar{\nabla A_\varepsilon(\zeta)}w|^2
-\tfrac 14 B_\varepsilon^{-\frac 32}
|\scalar{\nabla B_\varepsilon(\zeta)}w|^2.
\label{eq:7.7}
\end{align}
Assume now that $\scalar \zeta\zeta\neq 0$.  Then
\begin{equation*}
  \dfrac{\partial|\scalar\zeta\zeta|}{\partial\zeta_j}
=\dfrac{\overline{\scalar \zeta\zeta}}{|\scalar \zeta\zeta|}\zeta_j
=e^{-2i\theta}\zeta_j
\quad \text{ and } \quad 
  \dfrac{\partial^2|\scalar\zeta\zeta|}
{\partial\zeta_j\partial\bar \zeta_k}
=\dfrac{\zeta_j\bar \zeta_k}{|\scalar \zeta\zeta|},
\end{equation*}
so we get
\begin{gather*}
  \nabla A(\zeta)=\tfrac 12\big(
\bar\zeta+e^{-2i\theta}\zeta \big)=e^{-i\theta}a, \qquad
  \nabla B(\zeta)=\tfrac 12\big(
\bar\zeta-e^{-2i\theta}\zeta \big)=-ie^{-i\theta}b,\\
\L_A(\zeta;w)=\tfrac 12\big(
|w|^2+\tfrac{|\scalar \zeta w|^2}{A-B}\big),\qquad 
\L_B(\zeta;w)=\tfrac 12\big(
|w|^2-\tfrac{|\scalar \zeta w|^2}{A-B}\big),
\end{gather*}
and consequently
\begin{align}
  \label{eq:7.8}
\scalar{\nabla A_{\varepsilon}(\zeta)}w
&=e^{-i\theta}\scalar{a-i\varepsilon b}w,\\
  \label{eq:7.9}
\scalar{\nabla B_\varepsilon(\zeta)}w&=-ie^{-i\theta}\scalar{b+i\varepsilon a}w,\\
  \label{eq:7.10}
\L_{A_\varepsilon}(\zeta,w)
&=\tfrac{1}{2}\big((1+\varepsilon)|w|^2
+(1-\varepsilon)\tfrac{|\scalar \zeta w|^2}{A-B}\big),\\
  \label{eq:7.11}
\L_{B_\varepsilon}(\zeta,w)&=
\tfrac{1}{2}\big((1+\varepsilon)|w|^2
-(1-\varepsilon)\tfrac{|\scalar \zeta w|^2}{A-B}\big).
\end{align}
The function $h_\varepsilon$ is logarithmically homogeneous,
which implies $\L_{h_\varepsilon}(\zeta;\zeta)=0$ and 
that the Levi matrix of $h_\varepsilon$ has $0$ eigenvalue
with eigenvector $\zeta$. 

If  we take $w$ orthogonal to both $\zeta$ and $\bar \zeta$,
i.e., $\scalar \zeta w=\scalar{\bar \zeta}w=0$, then
$\scalar{\nabla A_\varepsilon(\zeta)}w=
\scalar{\nabla B_\varepsilon(\zeta)}w=0$, and the formulas
(\ref{eq:7.2})-(\ref{eq:7.11}) give
\begin{equation*}
  \L_{h_\varepsilon}(\zeta,w)
=\dfrac{(1+\varepsilon)}{4v_\varepsilon(\zeta)}
\bigg(\dfrac 1{A_\varepsilon(\zeta)^{\frac 12}}
+\dfrac 1{B_\varepsilon(\zeta)^{\frac 12}}
\bigg) |w|^2
=\dfrac {(1+\varepsilon)}
{2\varphi_\varepsilon(\zeta)}|w|^2.
\end{equation*}
From this formula it follows that 
$\tfrac 12 (1+\varepsilon)/\varphi_\varepsilon(\zeta)$
is an eigenvalue of the Levi matrix and that the eigenspace
contains $\{\zeta,\bar \zeta\}^\perp$, which is of dimension 
$N-1$ if $\zeta\in \C\R^N$ and of dimension $N-2$ if
$\zeta\in \C^N\setminus \C\R^N$.

Now we assume that  $\zeta\in \C^N\setminus \C\R^N$
and let $\xi=(|b|e_a-i|a|e_b)/|\zeta|$ 
be a unit vector in the span of $\zeta$ and
$\bar \zeta$ orthogonal to $\zeta$.
We know that $\xi$ is an eigenvector and the 
corresponding eigenvalue is $\L_{h_\varepsilon}(\zeta,\xi)$.  
In order to calculate $\L_{h_\varepsilon}(\zeta,\xi)$
 we first observe that by
(\ref{eq:7.8})-(\ref{eq:7.11}) we have 
\begin{align*}
  \scalar \zeta \xi
&=\dfrac{e^{i\theta}}{|\zeta|}
\scalar {a+ib}{|b|e_a-i|a|e_b}=\dfrac{2e^{i\theta}|a||b|}{|\zeta|}\\
\scalar{\nabla A_\varepsilon(\zeta)}\xi
&=\dfrac{e^{-i\theta}(1-\varepsilon)|a||b|}{|\zeta|}=-\scalar{\nabla
  B_\varepsilon(\zeta)}\xi\\
\L_{A_\varepsilon}(\zeta,\xi)&=\tfrac
12\big(1+\varepsilon+(1-\varepsilon)\dfrac{4AB}{A^2-B^2}\big)
\\
\L_{B_\varepsilon}(\zeta,\xi)&=\tfrac
12\big(1+\varepsilon-(1-\varepsilon)\dfrac{4AB}{A^2-B^2}\big).
\end{align*}
By these equations, (\ref{eq:7.6}), and (\ref{eq:7.7}) we get
\begin{gather*}
|\scalar{\nabla v_\varepsilon} \xi|^2
=\frac{AB(1-\varepsilon)^2(A_{\varepsilon}^{1/2}-B_{\varepsilon}^{1/2})^2}{4(A+B)A_{\varepsilon}B_{\varepsilon}},\\
\mathcal{L}_{v_\varepsilon}(\zeta,\xi)
=\frac{(1+\varepsilon)v_{\varepsilon}}{4A_\varepsilon^{1/2}B_{\varepsilon}^{1/2}}-\frac{(1-\varepsilon)
AB(A_\varepsilon^{1/2}-B_\varepsilon^{1/2})}{(A^2-B^2)A_{\varepsilon}^{1/2}B_{\varepsilon}^{1/2}}
-\frac{(1-\varepsilon)^2AB(A_\varepsilon^{3/2}+B_\varepsilon^{3/2})}{4(A+B)A^{3/2}_{\varepsilon}B^{3/2}_{\varepsilon}}.
\end{gather*}
By equation (\ref{eq:7.2}) we have
\begin{multline*}
\mathcal{L}_{h_\varepsilon}(\zeta,\xi)
=\frac{(1+\varepsilon)}{4A_\varepsilon^{1/2}B_{\varepsilon}^{1/2}}\\
-\frac{AB(1-\varepsilon)^2}
{4(A+B)v_{\varepsilon}A_{\varepsilon}^{1/2}B_{\varepsilon}^{1/2}}
\left(\frac{4(A_\varepsilon^{1/2}-B_\varepsilon^{1/2})}
{A_{\varepsilon}-B_{\varepsilon}}+\frac{A_\varepsilon^{3/2}+B_\varepsilon^{3/2}}
{A_\varepsilon B_\varepsilon}
+\frac{(A_\varepsilon^{1/2}-B_\varepsilon^{1/2})^2}
{A_\varepsilon^{1/2}B_\varepsilon^{1/2}v_\varepsilon} \right).
\end{multline*}
The last parenthesis equals
$(A_\varepsilon+B_\varepsilon)(A_\varepsilon^{\frac 12}
+B_\varepsilon^{\frac 12})/(A_\varepsilon B_\varepsilon)$. Hence
\begin{align*}
\mathcal{L}_{h_\varepsilon}(\zeta,\xi)
&=\frac{(1+\varepsilon)}
{4A_\varepsilon^{1/2}B_{\varepsilon}^{1/2}}-\frac{AB(1-\varepsilon)^2} 
{4(A+B)v_{\varepsilon}A_{\varepsilon}^{1/2}B_{\varepsilon}^{1/2}}\cdot \frac{(A_\varepsilon+B_{\varepsilon})v_\varepsilon}{A_{\varepsilon}B_\varepsilon}\\
&=\frac{(1+\varepsilon)}{4A_\varepsilon^{1/2}B_{\varepsilon}^{1/2}}
-\frac{AB(1-\varepsilon)^2(1+\varepsilon)}{4A^{3/2}_{\varepsilon}B^{3/2}_{\varepsilon}} \\
&=\frac{(1+\varepsilon)}{4(A_\varepsilon B_\varepsilon)^{3/2}}
\left(A_{\varepsilon}B_{\varepsilon}-AB(1-\varepsilon)^2\right)\\
&=\frac{\varepsilon(1+\varepsilon)(A+B)^2}{4(A_\varepsilon
  B_\varepsilon)^{3/2}}
=\frac{2\varepsilon(1+\varepsilon)|\zeta|^4}{\varphi_\varepsilon(\zeta)^3}.
\end{align*}
\end{proof}

\section{Proof of the main result}
\label{sec:7}

\noindent
The statement of Theorem \ref{th:1.1} is local, so 
without loss of generality we may from now on 
assume that  $X=\D^n$, where $\D$ is the unit disc in $\C$. 
In order to calculate the
Monge-Amp\`ere measure of the function
$h_\varepsilon\circ \Phi$ we need  a simple result from linear algebra.

\begin{lemma} \label{lem:8.1}
Let $D$ be an $(n+1)\times (n+1)$ Hermitian matrix and 
$A$ be an $(n+1)\times n$ matrix. Assume that $v$ is a 
unit eigenvector for $D$ with eigenvalue $0$ and denote 
by $\lambda_1,\dots,\lambda_n$ the remaining eigenvalues of $D$. 
Then
\begin{equation}\label{eq:8.1}
 \Det(A^*DA)=\lambda_1\cdots \lambda_n\cdot |\Det[A|v]|^2
\end{equation}
where $[A|v]$ is the $(n+1)\times (n+1)$ matrix 
obtained by adding $v$ as a column-vector to the 
right of the matrix $A$. 
\end{lemma}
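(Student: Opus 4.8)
The plan is to diagonalize $D$ and reduce the identity to the Cauchy--Binet formula. First I would choose a unitary matrix $U$ whose columns form an orthonormal eigenbasis for $D$, with the first column equal to the given unit eigenvector $v$ and the remaining columns $u_1,\dots,u_n$ eigenvectors for $\lambda_1,\dots,\lambda_n$. Then $U^*DU=\diag(0,\lambda_1,\dots,\lambda_n)$, so writing $B=U^*A$ (an $(n+1)\times n$ matrix) we have
\begin{equation*}
A^*DA = B^*\,\diag(0,\lambda_1,\dots,\lambda_n)\,B.
\end{equation*}
If $B$ has rows $r_0,r_1,\dots,r_n$ (so $r_0$ is the first row, corresponding to the eigenvalue $0$), then $A^*DA=\sum_{j=1}^n \lambda_j\, r_j^*r_j$, i.e.\ it is $\tilde B^*\tilde B$ where $\tilde B$ is the $n\times n$ matrix whose $j$-th row is $\sqrt{\lambda_j}\,r_j$ — here I should note the $\lambda_j$ need not be non-negative, so this is a formal device; more cleanly, $A^*DA = B_1^* \Lambda B_1$ where $B_1$ is the $n\times n$ matrix obtained from $B$ by deleting its first row and $\Lambda=\diag(\lambda_1,\dots,\lambda_n)$. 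Taking determinants gives $\Det(A^*DA)=\lambda_1\cdots\lambda_n\,|\Det B_1|^2$.

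It then remains to identify $|\Det B_1|^2$ with $|\Det[A|v]|^2$. For this I would compute $\Det[B|e_1]$, where $[B|e_1]=U^*[A|v]$ since $U^*v=e_1$; because $U$ is unitary, $|\Det[A|v]|=|\Det[B|e_1]|$. Expanding $\Det[B|e_1]$ along its last column, whose only nonzero entry is the $1$ in the first row, leaves (up to a sign that squares away) exactly the determinant of the matrix obtained from $B$ by deleting its first row and its last column — which is precisely $B_1$. Hence $|\Det[A|v]|^2=|\Det B_1|^2$, and combining with the previous paragraph yields \eqref{eq:8.1}.

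I expect the only real point requiring care to be the bookkeeping in this last step: making sure that ``delete the first row'' (coming from the $0$ eigenvalue being listed first) and ``delete the last column'' (coming from $v=e_1$ being appended on the right) are consistent, and that the cofactor sign is irrelevant because everything enters through an absolute value squared. No deep input is needed — the substance is just the reduction $A^*DA = B_1^*\Lambda B_1$ after unitary diagonalization together with a one-line Laplace expansion. One should also remark that the statement is independent of the choice of unit $0$-eigenvector $v$ (it is unique up to a unimodular scalar when the $\lambda_j$ are nonzero, and when some $\lambda_j=0$ both sides vanish), so no normalization issue arises.
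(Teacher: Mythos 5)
Your argument is correct and is essentially the paper's own proof: both diagonalize $D$ by a unitary change of coordinates sending $v$ to a standard basis vector, reduce $A^*DA$ to an $n\times n$ product $B_1^*\Lambda B_1$, and apply the product formula for determinants together with a one-line Laplace expansion to identify $|\Det B_1|$ with $|\Det[A|v]|$. The only difference is cosmetic (you place the $0$ eigenvalue first rather than last), and your write-up is somewhat more explicit about the final identification step than the paper's.
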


\begin{proof} 
By a change of coordinate systems we can assume that 
$D$ is a diagonal matrix with entries $\lambda_1,\dots,\lambda_n$ and
$0$, and $v=[0,\dots,0,1]^t$. Let $\tilde A$ be the 
$n\times n$ matrix obtained by removing the bottom row of $A$ 
and $\tilde D$ be the $n\times n$ diagonal matrix with
diagonal $\lambda_1,\dots,\lambda_n$.  Then
$A^*DA=\tilde A^* \tilde D\tilde A$ and
$$
\Det(A^*DA)=\Det(\tilde A^* \tilde D\tilde A)=
\lambda_1\cdots\lambda_n|\Det(\tilde{A})|^2
=\lambda_1\cdots\lambda_n|\Det([A|v])|^2
$$
by the product formula for determinants.
\end{proof}

We have $L_{h_{\varepsilon}\circ\Phi}(z)=J_{\Phi}(z)^*L_{h_\varepsilon}(\Phi(z))
J_{\Phi}(z)$, where $L_{h_{\varepsilon}\circ\Phi}$ and
$L_{h_\varepsilon}$ denote the Levi matrices of $h_{\varepsilon}\circ
\Phi$ and $h_{\varepsilon}$ respectively. By Theorem  \ref{th:6a.1} 
$L_{h_\varepsilon}(\zeta)$ has eigenvalue $0$ with
eigenvector $\zeta$, and the others are
$2\varepsilon(1+\varepsilon)|\zeta|^4/\varphi_\varepsilon(\zeta)^3$
with multiplicity $1$ and 
$(1+\varepsilon)/2\varphi_\varepsilon(\zeta)$ with multiplicity $n-1$.
Lemma \ref{lem:8.1} gives:

\begin{theorem}\label{th:4.3}
If  $\Phi:\D^n\to \C^{n+1}\setminus\{0\}$ is a holomorphic map, then
\begin{align*}
(dd^c (h_{\varepsilon}\circ \Phi))^n=\frac{2^{n+2}n!(1+\varepsilon)^n
\varepsilon|\Phi(z)|^2|\Det[J_\Phi(z)|\Phi(z)]|^2}
{\varphi_\varepsilon(\Phi(z))^{n+2}}dV 
\end{align*}
where $J_\Phi$ is the Jacobian of $\Phi$ and $[J_\Phi|\Phi]$ is the
$(n+1)\times (n+1)$ matrix obtained by adding $\Phi$ as a column vector
to the right of $J_\Phi$. 
\end{theorem}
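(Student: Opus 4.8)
The plan is to reduce the Monge-Amp\`ere formula to a direct application of Lemma~\ref{lem:8.1}. The starting point is the identity $L_{h_\varepsilon\circ\Phi}(z)=J_\Phi(z)^*L_{h_\varepsilon}(\Phi(z))J_\Phi(z)$ together with the definition
\begin{align*}
(dd^c(h_\varepsilon\circ\Phi))^n=4^nn!\Det\big(L_{h_\varepsilon\circ\Phi}(z)\big)\,dV.
\end{align*}
Thus the task is to compute $\Det\big(J_\Phi^*L_{h_\varepsilon}(\Phi)J_\Phi\big)$. Here $D=L_{h_\varepsilon}(\Phi(z))$ is an $(n+1)\times(n+1)$ Hermitian matrix and $A=J_\Phi(z)$ is an $(n+1)\times n$ matrix, so Lemma~\ref{lem:8.1} applies directly once we supply a unit eigenvector $v$ of $D$ with eigenvalue $0$.

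By Theorem~\ref{th:6a.1}, such a $v$ is $v=\Phi(z)/|\Phi(z)|$, and the remaining $n$ eigenvalues of $D$ are $\lambda_1=2\varepsilon(1+\varepsilon)|\Phi(z)|^4/\varphi_\varepsilon(\Phi(z))^3$ with multiplicity $1$ and $\lambda_2=(1+\varepsilon)/(2\varphi_\varepsilon(\Phi(z)))$ with multiplicity $n-1$. Hence
\begin{align*}
\lambda_1\cdots\lambda_n
=\frac{2\varepsilon(1+\varepsilon)|\Phi(z)|^4}{\varphi_\varepsilon(\Phi(z))^3}
\cdot\Big(\frac{1+\varepsilon}{2\varphi_\varepsilon(\Phi(z))}\Big)^{n-1}
=\frac{\varepsilon(1+\varepsilon)^n|\Phi(z)|^4}{2^{n-2}\varphi_\varepsilon(\Phi(z))^{n+2}}.
\end{align*}
Next, since $v=\Phi(z)/|\Phi(z)|$, multilinearity of the determinant in its last column gives $|\Det[J_\Phi(z)|v]|^2=|\Det[J_\Phi(z)|\Phi(z)]|^2/|\Phi(z)|^2$. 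Plugging both into \eqref{eq:8.1} yields
\begin{align*}
\Det\big(J_\Phi^*L_{h_\varepsilon}(\Phi)J_\Phi\big)
=\frac{\varepsilon(1+\varepsilon)^n|\Phi(z)|^2|\Det[J_\Phi(z)|\Phi(z)]|^2}{2^{n-2}\varphi_\varepsilon(\Phi(z))^{n+2}},
\end{align*}
and multiplying by $4^nn!=2^{2n}n!$ produces exactly the claimed constant $2^{n+2}n!$, after cancelling $2^{2n}/2^{n-2}=2^{n+2}$.

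The one point that needs a word of care — and the only genuine obstacle — is that Lemma~\ref{lem:8.1} and Theorem~\ref{th:6a.1} both presuppose that $D=L_{h_\varepsilon}(\Phi(z))$ has the stated eigenstructure, which is established in Theorem~\ref{th:6a.1} only under the hypothesis $\Phi(z)\in\C^{N}\setminus\C\R^{N}$, i.e. $\langle\Phi(z),\Phi(z)\rangle\neq0$. On the complementary (pluripolar) set, one eigenvalue collapses and the formula in Theorem~\ref{th:6a.1} is not literally valid; however, $\varphi_\varepsilon(\Phi(z))$ remains smooth and nonvanishing there for $\varepsilon>0$ (since $\varphi_\varepsilon^2=(1+\varepsilon)^2|\zeta|^4-(1-\varepsilon)^2|\langle\zeta,\zeta\rangle|^2\geq 4\varepsilon|\zeta|^4>0$), so both sides of the asserted identity are continuous in $z$ and agree on the dense open set where $\langle\Phi(z),\Phi(z)\rangle\neq0$; by continuity they agree everywhere on $\D^n$. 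I would record this density/continuity remark and otherwise let the computation above stand.
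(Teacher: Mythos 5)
Your computation is correct and is exactly the paper's argument: the paper's proof of Theorem~\ref{th:4.3} consists precisely of the identity $L_{h_{\varepsilon}\circ\Phi}=J_{\Phi}^*L_{h_\varepsilon}(\Phi)J_{\Phi}$, the eigenvalues from Theorem~\ref{th:6a.1}, and the words ``Lemma~\ref{lem:8.1} gives,'' so you have simply written out the arithmetic it leaves implicit (and your constants check out). One small slip in your closing remark: $\C^{N}\setminus\C\R^{N}$ is \emph{not} the set where $\langle\zeta,\zeta\rangle\neq0$ --- on $\C\R^{N}\setminus\{0\}$ one has $|\langle\zeta,\zeta\rangle|=|\zeta|^{2}\neq0$, whereas $\langle\zeta,\zeta\rangle=0$ forces $|a|=|b|$ and hence $\zeta\notin\C\R^{N}$ for $\zeta\neq0$ --- so the exceptional set is really $\Phi^{-1}(\C\R^{n+1})$; your density-and-continuity argument goes through unchanged with that set in place of the one you named, provided $\Phi^{-1}(\C^{n+1}\setminus\C\R^{n+1})$ is dense (in the degenerate case where $\Phi$ maps into $\C\R^{n+1}$ entirely, both sides vanish identically, so the formula still holds).
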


It is clear from the definition of $\mathcal{A}_{\Phi}$ in Theorem 
\ref{th:1.1} that
$$\mathcal{A}_{\Phi}=\{z\in \D^n \, ; \,
\Det[J_\Phi(z)|\Phi(z)]=0\}.
$$ 
The function $z\to
\Det[J_\Phi(z)|\Phi(z)]$ is holomorphic so either $\mathcal{A}_\Phi$
is pluripolar or it equals $\D^n$. If $\mathcal{A}_{\Phi}$ is pluripolar
then $\int_{\mathcal{A}_{\Phi}}(dd^c(\log|\Phi|_c))^n=0$ by
\cite[Proposition 4.6.4.]{Kli:1991}. If $\mathcal{A}_{\Phi}=\D^n$ 
then Theorem \ref{th:4.3} implies
$$(dd^c(\log|\Phi|_c))^n
=\lim_{\varepsilon\to 0}(dd^c (h_{\varepsilon}\circ\Phi))^n\equiv0.
$$
From now on we assume that 
$M= \Phi ^{-1}(\C\R^{n+1})\setminus {\mathcal A}_\Phi$ is non-empty.
In order to prove that it is a real analytic manifold of dimension 
$n$ we introduce the auxiliary map
$\tilde{\Phi}\colon\D^{n+1}\to\C^{n+1}$ defined by the equation 
\begin{align*}
\tilde{\Phi}(z,z_{n+1}):=(1+z_{n+1})\Phi(z),\qquad z\in \D^n,\; 
z_{n+1}\in \D.
\end{align*}
Notice that the Jacobians of $\tilde{\Phi}$ and $\Phi$ are related
by the equalities 
\begin{align}
J_{\tilde{\Phi}}=[(1+z_{n+1})J_\Phi|\Phi],\qquad \Det J_{\tilde{\Phi}}=(1+z_{n+1})^n\Det [J_\Phi|\Phi] \label{eq:7.2a}
\end{align}
and 
\begin{align}
\tilde{\Phi}^{-1}(\C\R^{n+1})=\Phi^{-1}(\C\R^{n+1})\times \D. \label{eq:7.3}
\end{align}
Let $z\in M$ and write $\tilde{z}=(z,0)\in \D^n\times \D$. By
        equation (\ref{eq:7.2a}) the map $\tilde{\Phi}$ is
        biholomorphic 
        in a neighborhood of $\tilde{z}$ and the variety
        $\tilde{\Phi}^{-1}(\C\R^{n+1})$ has $n+2$ real dimensions in a
        neighborhood of $\tilde{z}$. By equation (\ref{eq:7.3}) this
        means that $M$ has $n$ real dimensions in a neighborhood of
        $z$. 

Denote by 
$$
W:=\{z\in \D^{n+1} \,;\, \Det J_{\tilde \Phi}(z)\neq 0\}
=\big(\D^{n}\setminus {\mathcal A}_\Phi\big)\times \D
$$ 
the set on which $\tilde{\Phi}$ is 
locally biholomorphic and thus a
submersion. 
If $z$ is a member of $W$ then the pullback of currents by $\tilde{\Phi}$
at $z$ is well defined and we have the following result. 

\begin{theorem}\label{th:4.4} 
The $(2n+2)$-current 
\begin{align*}
	(dd^c(h\circ\Phi))^n
	\wedge \frac{i dz_{n+1}\wedge d\bar{z}_{n+1}}{2|1+z_{n+1}|^2}:=\lim_{\varepsilon\to 0}(dd^c(h_\varepsilon\circ\Phi))^n
	\wedge \frac{idz_{n+1}\wedge d\bar{z}_{n+1}}{2|1+z_{n+1}|^2}
\end{align*}
on $W\subset \D^{n+1}$ equals the pullback by
$\tilde{\Phi}\colon\D^{n+1}\to \C^{n+1}$ of the current of integration
along $\C\R^{n+1}$ of the $(n+2)$-form $-C_n|a|^{-(n+1)}d\theta\wedge
dV_{a},$ where $C_n=(-1)^{\frac{n(n-1)}{2}} n!\Omega_n$ and $\Omega_n$
is the volume of the unit ball in $\R^n$. 
\end{theorem}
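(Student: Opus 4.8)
The plan is to reorganize the formula of Theorem~\ref{th:4.3} into a pullback under $\tilde\Phi$ of an explicit measure on $\C^{n+1}\setminus\{0\}$, and then pass to the weak limit $\varepsilon\searrow0$.

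First I would wedge the identity of Theorem~\ref{th:4.3} with $\tfrac{i\,dz_{n+1}\wedge d\bar z_{n+1}}{2|1+z_{n+1}|^{2}}$ and substitute $|\tilde\Phi|^{2}=|1+z_{n+1}|^{2}|\Phi|^{2}$, the Jacobian relation~(\ref{eq:7.2a}), and the degree-$2$ homogeneity of $\varphi_\varepsilon$ (so $\varphi_\varepsilon(\tilde\Phi)=|1+z_{n+1}|^{2}\varphi_\varepsilon(\Phi)$, which follows from~(\ref{eq:7.4a}), since $|\zeta|^{2}$ and $|\scalar\zeta\zeta|$ are homogeneous of degree $2$). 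All powers of $|1+z_{n+1}|$ cancel and one obtains, on $W$,
$$(dd^c(h_\varepsilon\circ\Phi))^{n}\wedge\frac{i\,dz_{n+1}\wedge d\bar z_{n+1}}{2|1+z_{n+1}|^{2}}
=2^{n+2}n!(1+\varepsilon)^{n}\varepsilon\,\frac{|\tilde\Phi|^{2}\,|\Det J_{\tilde\Phi}|^{2}}{\varphi_\varepsilon(\tilde\Phi)^{n+2}}\,dV .$$
Since $\tilde\Phi$ is a local biholomorphism on $W$ we have $\tilde\Phi^{*}dV=|\Det J_{\tilde\Phi}|^{2}\,dV$, so the right-hand side equals $\tilde\Phi^{*}\mu_\varepsilon$, where
$$\mu_\varepsilon:=2^{n+2}n!(1+\varepsilon)^{n}\varepsilon\,\frac{|\zeta|^{2}}{\varphi_\varepsilon(\zeta)^{n+2}}\,dV$$
is a positive smooth measure on $\C^{n+1}\setminus\{0\}$.

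Next I would show that $\mu_\varepsilon$ converges weakly on $\C^{n+1}\setminus\{0\}$ to the current of integration along $\C\R^{n+1}$ of $-C_n|a|^{-(n+1)}\,d\theta\wedge dV_a$. Off $\C\R^{n+1}$ this is immediate, since there $\varphi_\varepsilon\to\varphi_0=2|a||b|>0$ and hence $\mu_\varepsilon=O(\varepsilon)$ uniformly on compact subsets of $\C^{n+1}\setminus\C\R^{n+1}$. Near $\C\R^{n+1}$ I would pass to the $(\theta,a,\beta)$-coordinates of Proposition~\ref{prop:3.2} with $N=n+1$ (so $a\in\R^{n+1}$, $\beta\in\R^{n}$, $|\beta|=|b|$), using a partition of unity subordinate to the patches $L_m$; there $|\zeta|^{2}=|a|^{2}+|\beta|^{2}$, $\varphi_\varepsilon=2\big((|a|^{2}+\varepsilon|\beta|^{2})(|\beta|^{2}+\varepsilon|a|^{2})\big)^{1/2}$, and $dV$ is the explicit form of Proposition~\ref{prop:3.2}. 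Substituting, $\mu_\varepsilon$ becomes a density against $d\theta\wedge dV_a\wedge dV_\beta$, and the rescaling $\beta=\sqrt{\varepsilon}\,|a|\,u$ shows that the $\beta$-integral concentrates on $\{\beta=0\}=\C\R^{n+1}\setminus\{0\}$ and produces the constant $\int_{\R^{n}}(1+|u|^{2})^{-(n+2)/2}\,du=\Omega_n$ (the volume of the unit ball of $\R^n$); the remaining factors, such as the correction $|\beta|^{2}/|a|$ in $dV$ and the factor $1+\varepsilon^{2}|u|^{2}$, tend to $1$ and are controlled by dominated convergence. Collecting constants leaves $n!\,\Omega_n$ times integration over $\{\beta=0\}$ of $|a|^{-(n+1)}\,d\theta\wedge dV_a$, up to a sign which, after careful bookkeeping of the sign in Proposition~\ref{prop:3.2}, the two-to-one nature of $(\theta,a)\mapsto e^{i\theta}a$, and the orientation convention for the current of integration along $\C\R^{n+1}$, equals $-C_n=-(-1)^{n(n-1)/2}n!\,\Omega_n$.

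Finally, since $\tilde\Phi$ is a submersion (indeed a local biholomorphism) on $W$, the pullback of currents by $\tilde\Phi$ is continuous for the weak topology of currents, so $\lim_{\varepsilon\to0}\tilde\Phi^{*}\mu_\varepsilon=\tilde\Phi^{*}\big(\lim_{\varepsilon\to0}\mu_\varepsilon\big)$; combined with the identity of the first step this is precisely the statement of the theorem, the left-hand side being by definition the limit of $(dd^c(h_\varepsilon\circ\Phi))^{n}\wedge\tfrac{i\,dz_{n+1}\wedge d\bar z_{n+1}}{2|1+z_{n+1}|^{2}}$. I expect the hard part to be the weak-limit computation: verifying the concentration on $\C\R^{n+1}$ with the correct uniform integrability of the rescaled densities, and, above all, tracking all the orientation signs so that the limit is exactly $-C_n$ times the stated current rather than a sign-ambiguous multiple of it.
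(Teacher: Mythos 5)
Your proposal is correct and follows essentially the same route as the paper: rewrite the density from Theorem \ref{th:4.3} as $\tilde{\Phi}^*$ of an explicit measure on $\C^{n+1}\setminus\{0\}$ using the degree-$2$ homogeneity of $\varphi_\varepsilon$ and \eqref{eq:7.2a}, pass to the $(\theta,a,\beta)$-coordinates of Proposition \ref{prop:3.2}, compute the weak limit by the rescaling $\beta=\sqrt{\varepsilon}\,|a|u$ (the paper's $r=\varepsilon^{-1/2}|\beta|$) with dominated convergence yielding the constant $\Omega_n$, and conclude by continuity of the pullback under weak limits. The only parts you leave schematic (the sign bookkeeping and the exact evaluation of the radial integral) are carried out explicitly in the paper and come out as you predict.
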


\begin{proof}
To distinguish between them, we now denote by $dV_n$ and $dV_{n+1}$ 
the Euclidean volume forms on $\C^n$ and $\C^{n+1}$,
respectively. Since the result is local it is sufficient to prove it
on $\tilde{\Phi}^{-1}(L_m)$ for some $m\in \{1,\dots,n+1\}$ where
$L_m$ is defined in Proposition \ref{prop:3.2}. Indeed by 
Theorem \ref{th:4.3}, homogeneity of the function
$\varphi_{\varepsilon}$, and by equation (\ref{eq:7.2a}) we have 
\begin{align*}
(dd^c(h_\varepsilon\circ\Phi))^n
\wedge \frac{idz_{n+1}\wedge d\bar{z}_{n+1}}{2|1+z_{n+1}|^2}
&=\frac{2^{n+2}n!(1+\varepsilon)^n
\varepsilon|\Phi|^2|\Det[J_\Phi|\Phi]|^2}
{|1+z_{n+1}|^2(\varphi_\varepsilon\circ\Phi)^{n+2}}dV_{n+1}\\
&=\frac{2^{n+2}n!(1+\varepsilon)^n
\varepsilon|\tilde{\Phi}|^2
|\Det J_{\tilde{\Phi}}|^2}{(\varphi_\varepsilon
\circ\tilde{\Phi})^{n+2}}dV_{n+1}\\
&=\tilde \Phi^*\left(\frac{2^{n+2}n!(1+\varepsilon)^n 
\varepsilon|\zeta|^2}{(\varphi_\varepsilon(\zeta))^{n+2}}dV_{n+1}\right)\\
&=(-1)^{\frac{n(n-1)}{2}}n!(1+\varepsilon)^n\tilde{\Phi}^*\big(\lambda_{\varepsilon}d\theta\wedge dV_a\wedge dV_{\beta}\big),
\end{align*}
where $\lambda_{\varepsilon}$ is the function defined by the
last equality and $\theta,a,\beta$ are variables
introduced in Proposition \ref{prop:3.2}. Since the pullback
is a continuous operation under weak limits it suffices to
show that 
$$\lim_{\varepsilon\to 0}(\lambda_{\varepsilon}d\theta\wedge dV_a\wedge
        dV_{\beta})
=-\Omega_n|a|^{-(n+1)}d\theta \wedge dV_a.
$$ 
By Proposition \ref{prop:3.2} and by equation
        (\ref{eq:7.4b}) we have 
	\begin{align*}
		\lambda_{\varepsilon}(\theta,a,\beta)=\frac{-\varepsilon\big(|a|^2+|\beta|^2\big)\big(|a|-\tfrac{|\beta|^2}{|a|}\big)}{\big((1+\varepsilon^2)|a|^2|\beta|^2+\varepsilon(|a|^4+|\beta|^4) \big)^{\tfrac{n+2}{2}}},\qquad (\theta,a,\beta)\in \tilde{L}.
	\end{align*}
Now let $\chi\in C^{\infty}_0(\C^{n+1}\setminus\{0\})$ 
be a test function. We want to show that
\begin{align*}
\lim_{\varepsilon\to 0}
\int_{|\beta|<|a|}\chi(\theta,a,\beta)\lambda_{\varepsilon}(\theta,a,\beta) dV_\beta=-\chi(\theta,a,0)\Omega_n|a|^{-(n+1)}.
\end{align*}
Then the result follows by a simple application of Fubini's theorem. We
calculate this limit directly by switching into spherical coordinates
scaled by a factor of $\varepsilon^{1/2}$. We write
$r=\varepsilon^{-1/2}|\beta|$, we denote by $v$ the unit vector in the
direction of $\beta$ and by $d\sigma$ the Euclidean measure on the
$(n-1)$-dimensional unit sphere $S^{n-1}$. Hence
$\beta=\varepsilon^{1/2}rv$ and
$dV_{\beta}=\varepsilon^{n/2}r^{n-1}drd\sigma$. By the dominated
convergence theorem we have 
\begin{align*}
	&\lim_{\varepsilon\to 0}
\int_{|\beta|<|a|}\chi(\theta,a,\beta)\lambda_{\varepsilon}(\theta,a,\beta) dV_\beta\\
	=&\lim_{\varepsilon\to 0}\int_{\subalign{r&\in (0,|a|\varepsilon^{-1/2})\\ v&\in S^{n-1}}}\frac{-\varepsilon \big(|a|^2+\varepsilon r^2\big)\big(|a|-\tfrac{\varepsilon r^2}{|a|}\big)\varepsilon^{\frac{n}{2}}r^{n-1}}{(\varepsilon(1+\varepsilon^2)|a|^2r^2+\varepsilon(|a|^4+\varepsilon^2r^4))^{\tfrac{n+2}{2}}}\chi(\theta,a,\varepsilon^{1/2}rv) drd\sigma\\
	=&\int_{\subalign{r&\in(0,\infty)\\v&\in
                                              S^{n-1}}}\frac{-|a|^3r^{n-1}}{(|a|^2
                                              r^2+|a|^4)^{1+n/2}}\chi(\theta,a,0)
                                              drd\sigma\\
	=&-\operatorname{Vol}(S^{n-1})\chi(\theta,a,0)|a|^{-(n-1)}\int_{0}^{\infty}\frac{r^{n-1}}{(r^2+|a|^2)^{1+n/2}}dr\\
	=&-\operatorname{Vol}(S^{n-1})\chi(\theta,a,0)|a|^{-(n-1)}\left[\frac{r^n}{n|a|^2(|a|^2+r^2)^{n/2}} \right]_0^{\infty}\\
	=&-\Omega_n\chi(\theta,a,0)|a|^{-(n+1)}.
\end{align*}
\end{proof}
We need one more result before we prove Theorem \ref{th:1.1}. To
simplify notation we write 
$\Lambda_{\Phi_j}:=d\Phi_0\wedge\cdots\wedge
\widehat{d\Phi_j}\wedge\cdots\wedge d\Phi_n$ for $j\in \{0,\dots,n\}$ and
$V:=\tilde{\Phi}^{-1}(\C\R^{n+1})=\Phi^{-1}(\C\R^{n+1})\times
\D$. Abusing notation a little, for $z=(z',z_{n+1})\in \D^n\times \D$
we interpret $\Phi(z)$ as $\Phi(z')$, i.e.\ we use the same symbol
$\Phi$ to denote the trivial extension of $\Phi$ to $\D^{n+1}$. 

\begin{proposition}\label{prop34}
 The restrictions of the $(n+2)$-forms
         $\tilde{\Phi}^*\big(d\theta\wedge dV_a\big)$ and 
\begin{align*}
\dfrac{(1+z_{n+1})^{n+1}}
{|1+z_{n+1}|^2}
e^{-i(n+1)\theta}\Big(\sum_{j=0}^{n}(-1)^{j+1}\Phi_j\Lambda_{\Phi_j}\Big)
\wedge \tfrac i2 dz_{n+1}\wedge d\bar{z}_{n+1}
	\end{align*}
	to $V\cap W$ are equal.
\end{proposition}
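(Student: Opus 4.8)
The plan is to evaluate both $(n+2)$-forms in a neighbourhood of $V\cap W=M\times\D$ and check they agree there; since the statement is local, this suffices. Near such a point $\langle\Phi,\Phi\rangle=\Phi_0^2+\cdots+\Phi_n^2$ is zero-free (on $M$ one has $\Phi\in\C\R^{n+1}\setminus\{0\}$, hence $\langle\Phi,\Phi\rangle=e^{2i\theta}|a|^2\neq0$), so we may fix the holomorphic square root $\rho=\sqrt{\langle\Phi,\Phi\rangle}$ with the branch of Theorem~\ref{th:1.1}, put $\phi=\arg\rho$ and $A_k=\Re(e^{-i\phi}\Phi_k)$ — functions of $z'\in\D^n$ only — and write $1+z_{n+1}=t\,e^{i\sigma}$ with $t=|1+z_{n+1}|>0$, $\sigma=\arg(1+z_{n+1})$ (legitimate since $\Re(1+z_{n+1})>0$). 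A short computation gives $\tfrac i2\,dz_{n+1}\wedge d\bar z_{n+1}=t\,dt\wedge d\sigma$. Regarding $\theta,a_k$ as the functions $e^{i\theta}=\sqrt{\langle\zeta,\zeta\rangle}/|\langle\zeta,\zeta\rangle|^{1/2}$, $a_k=\Re(e^{-i\theta}\zeta_k)$ on $\{\langle\zeta,\zeta\rangle\neq0\}$, and using $\langle\tilde\Phi,\tilde\Phi\rangle=(1+z_{n+1})^2\rho^2$, $\tilde\Phi_k=(1+z_{n+1})\Phi_k$, one gets, after matching branches, $\tilde\Phi^*\theta=\sigma+\phi$ and $\tilde\Phi^*a_k=t\,A_k$.

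First I would compute the left-hand form. From the above, with $dV_a=da_0\wedge\cdots\wedge da_n$,
\[
\tilde\Phi^*(d\theta\wedge dV_a)=(d\sigma+d\phi)\wedge\bigwedge_{k=0}^n d(tA_k)=(d\sigma+d\phi)\wedge\Big(t^{n+1}\bigwedge_{k=0}^n dA_k+t^n\,dt\wedge\Theta\Big),
\]
where $\Theta:=\sum_{j=0}^n(-1)^jA_j\,dA_0\wedge\cdots\wedge\widehat{dA_j}\wedge\cdots\wedge dA_n$. Restricting to $M\times\D$ and using that $\dim_{\R}M=n$: any wedge of more than $n$ of the $z'$-forms $dA_0,\dots,dA_n,d\phi$ vanishes on $M$, so the terms with $\bigwedge_k dA_k$ and with $d\phi\wedge\Theta$ all die, leaving $d\sigma\wedge t^n\,dt\wedge\Theta$; rewriting $dt\wedge d\sigma$ by the identity above gives
\[
\tilde\Phi^*(d\theta\wedge dV_a)\big|_{V\cap W}=-\,t^{n-1}\,\Theta\wedge\tfrac i2\,dz_{n+1}\wedge d\bar z_{n+1}.
\]

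For the second form I would invoke the Euler field $E=\sum_j z_j\,\partial/\partial z_j$ on $\C^{n+1}$: since $\iota_E(dz_0\wedge\cdots\wedge dz_n)=\sum_j(-1)^jz_j\,dz_0\wedge\cdots\wedge\widehat{dz_j}\wedge\cdots\wedge dz_n$, we have $\sum_j(-1)^{j+1}\Phi_j\Lambda_{\Phi_j}=-\,\Phi^*\big(\iota_E(dz_0\wedge\cdots\wedge dz_n)\big)$. Restricting $\iota_E(dz_0\wedge\cdots\wedge dz_n)$ to $\C\R^{n+1}$ via $\zeta=e^{i\theta}a$ (so $dz_k=e^{i\theta}(da_k+ia_k\,d\theta)$) and noting that the $d\theta$-terms assemble into $i\,d\theta\wedge\iota_{E_a}\iota_{E_a}(dV_a)=0$, with $E_a=\sum_j a_j\,\partial/\partial a_j$, one gets $\iota_E(dz_0\wedge\cdots\wedge dz_n)\big|_{\C\R^{n+1}}=e^{i(n+1)\theta}\iota_{E_a}(dV_a)$. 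Pulling this back by $\Phi$ and restricting to $V$, where $\Phi^*\theta=\phi$ and $\Phi^*a_k=A_k$, yields $\sum_j(-1)^{j+1}\Phi_j\Lambda_{\Phi_j}\big|_V=-\,e^{i(n+1)\phi}\,\Theta$. Since the symbol $\theta$ in the statement is the pullback $\tilde\Phi^*\theta=\sigma+\phi$ of the angular coordinate, the scalar prefactor is $\dfrac{(1+z_{n+1})^{n+1}}{|1+z_{n+1}|^2}e^{-i(n+1)\theta}=t^{n+1}e^{i(n+1)\sigma}t^{-2}e^{-i(n+1)(\sigma+\phi)}=t^{n-1}e^{-i(n+1)\phi}$, so the second form, restricted to $V\cap W$, equals $t^{n-1}e^{-i(n+1)\phi}\cdot(-e^{i(n+1)\phi}\Theta)\wedge\tfrac i2\,dz_{n+1}\wedge d\bar z_{n+1}=-\,t^{n-1}\Theta\wedge\tfrac i2\,dz_{n+1}\wedge d\bar z_{n+1}$, which is exactly the expression found above.

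The two coordinate computations are routine; the genuinely delicate step is the branch bookkeeping — ensuring $\tilde\Phi^*\theta=\sigma+\phi$ rather than $\sigma+\phi+\pi$ and normalising $\rho$ exactly as in Theorem~\ref{th:1.1}, so that $\Phi^*\theta=\phi$ and the factors $e^{\pm i(n+1)\phi}$ cancel at the end. The conceptual content is the double collapse: all but one term on the left dies because $M$ has real dimension $n$, and the angular correction on the right dies because $\iota_{E_a}\circ\iota_{E_a}=0$.
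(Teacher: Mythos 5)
Your proof is correct, and it reaches the identity by a genuinely different route than the paper. The paper never solves the relation $(1+z_{n+1})\Phi_j=e^{i\theta}a_j$ explicitly: it differentiates it to get the $1$-form identity (\ref{jay}), wedges over $j=0,\dots,n$ (killing $d\Phi_0\wedge\cdots\wedge d\Phi_n$ by the same rank/dimension argument you use on $M$), and then wedges with a single \emph{conjugated} copy of the relation to eliminate the unwanted $dV_a$ term and isolate $d\theta\wedge dV_a$; the factor $(1+\bar z_{n+1})\overline{\Phi}_{j_0}=a_{j_0}e^{-i\theta}$ is cancelled at the end. You instead introduce explicit polar-type coordinates $t,\sigma$ in the fibre and $\phi,A_k$ on the base, reduce each side separately to the common normal form $-t^{n-1}\Theta\wedge\tfrac i2\,dz_{n+1}\wedge d\bar z_{n+1}$, and handle the right-hand form via the Euler field identity $\iota_{E}\iota_{E}=0$ (the paper's $\sum_j(-1)^ja_j\Lambda_{a_j}$ is of course $\iota_{E_a}dV_a$, but it never invokes this structure). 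The underlying inputs are the same --- the relation on $V$, the vanishing of $(n+1)$-fold wedges of $z'$-forms on the $n$-dimensional $M$, and careful branch bookkeeping for $\theta$ and the square root --- but your version makes the two sides individually explicit and makes the branch choices visible, at the cost of more coordinate bookkeeping, whereas the paper's conjugate-wedge trick is shorter and avoids naming $t,\sigma,\phi,A$ at all. One small point worth making explicit if you write this up: the functions $\theta$ and $a$ (hence $dV_a$, for $n+1$ odd) are only defined up to the simultaneous change $\theta\mapsto\theta+\pi$, $a\mapsto-a$, so the statement and your computation are to be read locally with a fixed consistent branch, exactly as in the paper.
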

\begin{proof}
	On $V$ the relation between $\Phi_j$, $a_j=a_j\circ\tilde{\Phi}$ and $\theta=\theta\circ \tilde{\Phi}$ is given by
	\begin{align*}
		(1+z_{n+1})\Phi_j=e^{i\theta}a_j,\qquad 0\leq j\leq n,
	\end{align*}
	so
	\begin{align}
		\Phi_jdz_{n+1}+(1+z_{n+1})d\Phi_j|_{V}=ie^{i\theta}a_jd\theta+e^{i\theta}da_j|_{V}.\label{jay}
	\end{align}
The Jacobian of the map $\Phi|_V\colon V\to \C\R^{n+1}$ does not
have full rank because $\Phi$ is independent of the variable
$z_{n+1}$. Therefore $d\Phi_0\wedge\cdots\wedge d\Phi_n|_{V}=0$. 
Wedging equation (\ref{jay}) over all possible $j$ gives
\begin{align}
(1+z_{n+1})^n&\Big(\sum_{j=0}^{n}(-1)^{n-j}\Phi_j\Lambda_{\Phi_j}\Big)\wedge dz_{n+1}|_V\nonumber\\
&=e^{i(n+1)\theta}\Big(dV_a+id\theta\wedge\sum_{j=0}^{n}(-1)^{j}a_j\Lambda_{a_j}\Big)|_V\label{pomm2}.
\end{align}
Let $0\leq j_0\leq n$ be any fixed number. Similarly as before we have
\begin{align}
\overline{\Phi}_{j_0}d\bar{z}_{n+1}+(1+\bar{z}_{n+1})d\overline{\Phi}_{j_0}|_V=-ie^{-i\theta}a_{j_0}d\theta+e^{-i\theta}da_{j_0}|_V.\label{pomm1}
\end{align}
Wedging equations (\ref{pomm1}) and  (\ref{pomm2}) gives
\begin{align}
\bar\Phi_{j_0}(1+z_{n+1})^n
&\Big(\sum_{j=0}^{n}(-1)^{n-j}\Phi_j\Lambda_{\Phi_j}\Big)\wedge
  dz_{n+1}\wedge d\bar z_{n+1}|_V
\nonumber\\
&=ia_{j_0}e^{in\theta}\big
(-dV_a\wedge d\theta +
(-1)^{j_0} d\theta\wedge \Lambda_{a_{j_0}}\wedge da_{j_0}\big)|_V
\nonumber
\\
&=2i(-1)^na_{j_0}e^{in\theta}d\theta\wedge dV_a|_V.
\label{pomm3}
\end{align}
After canceling $(1+\bar
z_{n+1})\overline{\Phi}_{j_0}=a_{j_0}e^{-i\theta}$ the result follows. 
\end{proof}

\begin{proof}[Proof of Theorem \ref{th:1.1}]
As we already noted after  Theorem \ref{th:4.3}, 
the Monge-Amp\`ere measure has no mass on ${\mathcal A}_\Phi$
and $M$ is an $n$ dimensional real analytic manifold,
if it is non-empty.
The map $\tilde{\Phi}$ is a
submersion on $W$.
For $\tilde{\Phi}\in \C\R^{n+1}$ we have
	\begin{align}
		|a\circ\tilde{\Phi}|e^{i\theta}=|\tilde{\Phi}|e^{i\theta}=\langle \tilde{\Phi},\tilde{\Phi}\rangle^{\tfrac{1}{2}}=(1+z_{n+1})\langle \Phi,\Phi\rangle^{\tfrac{1}{2}} \label{eq:7.6a}
	\end{align}
	where the complex square root is taken such that $\langle
        \tilde{\Phi},\tilde{\Phi}\rangle^{\tfrac{1}{2}}$ has the same
        argument $\theta$ as $\tilde{\Phi}\in \C\R^{n+1}$. 
	By combining Theorem \ref{th:4.4}, Proposition \ref{prop34} and equation (\ref{eq:7.6a}) we see that the $(2n+2)$-current
	\begin{align}
&(dd^c(h\circ \Phi))^n\wedge \frac{idz_{n+1}\wedge 
d\bar{z}_{n+1}}{2|1+z_{n+1}|^2}\label{eq:7.7a}
	\end{align}
on $W$ equals the current of integration along $V$ of the $(n+2)$-form 
	\begin{align}
	C_n\langle
          \Phi,\Phi\rangle^{-\tfrac{n+1}{2}}\Big(\sum_{j=0}^{n}(-1)^{j}\Phi_j\Lambda_{\Phi_j}\Big)\wedge
          \frac{idz_{n+1}\wedge
          d\bar{z}_{n+1}}{2|1+z_{n+1}|^2}.\label{eq:7.8a} 
	\end{align}
Let  $\omega_1:=(dd^c(h\circ \Phi))^n$ and define $\omega_2$ 
as the current of integration along $\Phi^{-1}(\C\R^{n+1})$ of 
the $n$-form $\sum_{j=0}^{n}(-1)^{j}\Phi_j\Lambda_{\Phi_j}$.
Then 
\begin{align*}
	\int_{\D^{n+1}}\chi\omega_j\wedge dz_{n+1}\wedge
  d\bar{z}_{n+1}=\Big(\int_{\D^n}\chi_1\omega_j\Big)
  \Big(\int_{\D}\chi_2 dz_{n+1}\wedge d\bar{z}_{n+1}\Big),  
\end{align*}
for $j=1,2$ and $\chi\in C^{\infty}_0(W)$ of the form 
$\chi(z)=\chi_1(z')\chi_2(z_{n+1})$.
This means that we can simply cancel out a factor of
$\tfrac{i}{2}dz_{n+1}\wedge d\bar{z}_{n+1}$ from equations
(\ref{eq:7.7a}) and (\ref{eq:7.8a}) when taking the restriction to
$W\cap\{z_{n+1}=0\}=\D^n\setminus {\mathcal A}_\Phi$.  
\end{proof}

{\small
\bibliographystyle{plain}
\bibliography{sigurdsson-snaebjarnarson-bibref}

\medskip\noindent
Department of Mathematics, \\
School of Engineering and Natural Sciences, \\
University of Iceland,\\
IS-107 Reykjav\'ik, ICELAND.\\
{\tt ragnar@hi.is, audunnskuta@hi.is}
}
\end{document}